\numberwithin{equation}{theorem}
\renewcommand{\:}{\colon}
\renewcommand{\m}{\mathfrak{m}}
\renewcommand{\n}{\mathfrak{n}}
\newcommand{\p}{\mathfrak{p}}
\newcommand{\kay}{\mathcal{k}}
\newcommand{\el}{\mathcal{l}}
\newcommand{\q}{\mathfrak{q}}
\DeclareMathOperator{\colim}{colim}
\DeclareMathOperator{\Ass}{Ass}
\let\Ass\relax
\DeclareMathOperator{\Ass}{\mathbf{Ass}}
\DeclareMathOperator{\ssHom}{ \sH\!\!\;\!\!\text{\calligra{\Large om}\,}}
\newcommand{\invrs}{^{-1}}
\renewcommand{\sC}{\mathcal{C}}
\renewcommand{\sF}{\mathcal{F}}
\renewcommand{\sG}{\mathcal{G}}
\renewcommand{\sH}{\mathcal{H}}
\renewcommand{\sO}{\mathcal{O}}
\begin{document}
\title{On Pristine Morphisms} 
\author[J.~Carvajal-Rojas]{Javier Carvajal-Rojas}
\address{Centro de Investigaci\'on en Matem\'aticas, A.C., Callej\'on Jalisco s/n, 36024 Col. Valenciana, Guanajuato, Gto, M\'exico}
\email{\href{mailto:javier.carvajal@cimat.mx}{javier.carvajal@cimat.mx}}
\author[A.~St\"abler]{Axel St\"abler}
\address{Universit\"at Leipzig\\ Mathematisches Institut\\ Augustusplatz 10\\
04109 Leipzig\\Germany} 
\email{\href{mailto:staebler@math.uni-leipzig.de}{staebler@math.uni-leipzig.de}}

\keywords{Relative Frobenius, weakly étale, formally étale, Kunz theorem, Cartier module.}

\thanks{Carvajal-Rojas was partially supported by the grants ERC-STG \#804334, FWO \#G079218N, and SECIHTI \#CBF2023-2024-224, \#CF-2023-G-33, and \#CBF-2025-I-673. Stäbler was supported in part by SFB/TRR 45 Bonn-Essen-Mainz funded by DFG}

\subjclass[2020]{13A35, 14B25}

\begin{abstract}
We investigate flat morphisms of schemes of positive characteristic whose relative Frobenius is an isomorphism, which we call pristine. We show that these give rise to a natural Grothendieck topology that is fine tuned for the localization of Cartier modules.
\end{abstract}
\maketitle

\section{Introduction}

In algebraic geometry, one investigates geometric spaces cut out by polynomials with coefficients in a fixed base ring $\kay$. When $\kay$ is a field, it may have characteristic zero, like $\bC$ or $\bQ$, or positive characteristic $p>0$, like the finite fields $\bF_{p^e}$ and their extensions. These two settings produce geometries that are strikingly different, yet deeply interconnected.

In characteristic $p>0$, every such space $X$ comes with a canonical endomorphism $F = F_X \: X \to X$, its \emph{(absolute) Frobenius morphism}. This is in sharp contrast to characteristic zero, where nontrivial endomorphisms are actually quite rare. Roughly speaking, $F$ is defined by sending local coordinates to their $p$-th powers. This construction makes sense because the map $F \: r \mapsto r^p$
is a ring homomorphism precisely over rings of positive characteristic.

A large portion of algebraic geometry in positive characteristic reduces to analyzing Frobenius endomorphisms and their induced functorial actions. This leads to a wide array of invariants and notions defined via the Frobenius morphism. It is therefore crucial to understand how these objects transform under various maps $f\:Y \to X$. As one might expect, any natural description must involve the \emph{(relative) Frobenius morphism} $F_f \: X \to X^{(p)}$ of $f$, whose definition and basic properties will be given in \autoref{sec.pristineandregular}. In this work, we specialize to the situation where $f$ is flat and $F_f$ is an isomorphism; we call such morphisms \emph{pristine}. They form the class of maps along which Frobenius-theoretic structures should pass without obstruction.

It is nearly tautological that if $f\: Y \to X$ is a pristine morphism, then the Frobenius invariants of $X$ coincide with those of $Y$. However, to formulate this rigorously, we must first clarify the precise nature of pristine morphisms. We will find that such maps are intimately connected to formally étale morphisms and that they naturally give rise to an fpqc site, one that is finer than the pro-étale topology of Bhatt and Scholze \cite{BhattScholzeProetale}.

In fact, weakly étale morphisms are pristine, and, in turn, pristine morphisms are formally étale. We discuss this among other general properties of pristine maps in \autoref{sec.Pristinity}. Conversely, any formally étale morphism between locally noetherian schemes is necessarily pristine; see \autoref{fetaleprepristinefornoetherian}. This may be viewed as a Kunz-type theorem characterizing formal étaleness. The statement is non-trivial and was independently and simultaneously proved by Datta–Olander \cite{DataOlanderFIso} through methods distinct from ours. It is also worth noting that this equivalence was observed earlier by Daniel Fink in the $F$-finite setting \cite{FINKRelativeInverseLimitPerfection}, again using quite different techniques. 

Our strategy proceeds by establishing a Kunz-type theorem for formal unramification; see \autoref{NoetherianUnramifiedRelFClosedImmersion}. This depends on deep theorems already available in the literature, made possible by the recent work of Ma and Polstra \cite{polstramafsingularities}, who corrected substantial portions of the earlier results of Fogarty \cite{FogartyKahlerHilbert14thproblem} and Tyc \cite{TycDifferentialBasisAndp-basis}. We dedicate \autoref{sec.NoetherianPristinity} to the study of pristine morphisms in the noetherian setting.

From this vantage point, the guiding problem is to understand how Frobenius invariants behave locally along pristine maps. A prominent framework for expressing these invariants is that of \emph{Cartier modules}. In \autoref{sect.pristinepullbacks}, we develop a formalism for pulling back Cartier modules along pristine morphisms, and we will see that the notion of pristinity is fine tuned to make such pullbacks possible. Consequently, the pristine topology is tailored to the local analysis of Cartier modules. This, in turn, naturally points to a plethora of corollaries describing both the invariance and the local nature of $F$-singularities under pristine morphisms. We briefly indicate some basic instances, but in order to keep the present discussion focused on pristinity, a more extensive treatment will be pursued elsewhere.

\subsection*{Acknowledgements} 
 The authors would like to thank Manuel~Blickle, Rankeya~Datta, Anne~Fayolle, Daniel~Fink, Sri Iyengar, Peter~McDonald, Noah~Olander, Daniel~Smolkin, and Maciej~Zdanowicz for very useful discussions during the preparation of this article. 

\subsection*{General conventions} We will follow the following convention for the rest of this work. All schemes are defined over $\mathbb{F}_p$ for a fixed prime number $p$. Given a scheme $X$, we denote the $e$-th iterate of its (absolute) Frobenius endomorphism by $F^e=F^e_X\:X \to X$, and we use the customary short-hand notation $q \coloneqq p^e$. We say that $X$ is $F$-finite if $F$ is a finite morphism.

\section{Generalities on Frobenius}
\label{sec.pristineandregular}

In this preliminary section, we discuss the basics of the relative Frobenius of an arbitrary morphism of schemes. We aim to define the notions of \emph{$F$-flatness} and \emph{pristinity}, which we view; respectively, as the Frobenius-theoretic counterparts of regularity and \'etaleness. We further summarize some known results about the equivalence between Frobenius-theoretic properties and those defined by differentials.

Let $f \: X \to S$ be a morphism of schemes. The \emph{($e$-th relative) Frobenius morphism} of $f \: X \to S$ is defined as the dotted morphism in the following fibered product diagram:
\begin{align}\label{eqn.ReelativeFrobeniusDiagram}
\xymatrixcolsep{4pc}\xymatrixrowsep{3pc}\xymatrix{
X \ar@/_/[ddr]_-{f} \ar@/^/[drr]^-{F^e_X} \ar@{.>}[dr]|-{F^e_{f}}\\
&X^{(q)} \ar[d]_-{f^{(q)}} \ar[r]^-{G^e_f}  & X\ar[d]^-{f} \\
&S \ar[r]^-{F^e_S} &S}
\end{align}
We may also write $F^e_{X/S}=F^e_{f}$. For a map $f = \Spec \alpha \: \Spec A \to \Spec R$, we shall write
\[
F_{\alpha}^e = F^e_{A/R} \: A_{F^e_* R} \coloneqq A \otimes_R F^e_* R \to F^e_* A, \quad a \otimes F^e_*r\mapsto F^e_* a^qr
\]
for the corresponding homomorphism $\alpha \: R \to A$. We may also write $F^e_{\alpha,*} A$ to denote the $A_{F^e_* R}$-module obtained from viewing $F^e_* A$ as an $A_{F^e_* R}$-module.

The construction of relative Frobenius is functorial, as explained in \cite[\href{https://stacks.math.columbia.edu/tag/0CCA}{Tag 0CCA}]{stacks-project}. For further details on relative Frobenius morphisms, see \cite[\href{https://stacks.math.columbia.edu/tag/0CC6}{Tag 0CC6}]{stacks-project}. As an example, we showcase the following result, which we will use repeatedly. 

\begin{proposition}[{\cite[\href{https://stacks.math.columbia.edu/tag/0CCB}{Tag 0CCB}]{stacks-project}}] \label{lem.RelativveFrobeniiUniversalHomeos}
With notation as above, $F^e_{f}$ is an integral universal homeomorphism that induces purely inseparable residue field extensions.
\end{proposition}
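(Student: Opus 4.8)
The plan is to reduce to the affine situation and then read off the asserted properties from a single elementary observation about $q$-th powers, supplementing it with the factorization of the absolute Frobenius at the one point where the affine computation is not self-contained. Since being integral, being a universal homeomorphism, being surjective, and inducing purely inseparable residue field extensions are all local on the target and compatible with passing to affine opens (by functoriality of the relative Frobenius), I may assume $S = \Spec R$ and $X = \Spec A$, so that $F^e_f$ corresponds to the ring map $F^e_\alpha \colon B \coloneqq A \otimes_R F^e_* R \to C \coloneqq F^e_* A$, $a \otimes F^e_* r \mapsto F^e_* a^q r$.

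The elementary observation is that for every $c = F^e_* a \in C$ one has $c^q = F^e_*(a^q) = F^e_\alpha(a \otimes F^e_* 1)$, so the $q$-th power of an arbitrary element of $C$ lies in $\im F^e_\alpha$. Integrality is then immediate, since each $c \in C$ is a root of the monic polynomial $T^q - c^q$ whose constant term lies in the image. The same observation yields purely inseparable residue field extensions: for a prime $\mathfrak q \subset C$ with contraction $\mathfrak p \subset B$, the map $B \to \kappa(\mathfrak q)$ factors through $\kappa(\mathfrak p)$, and $\kappa(\mathfrak q)$ is generated over $\kappa(\mathfrak p)$ by classes $\bar c$ of elements $c \in C$, each of which satisfies $\bar c^{\,q} = \overline{c^q} \in \kappa(\mathfrak p)$; hence $\kappa(\mathfrak p) \to \kappa(\mathfrak q)$ is purely inseparable. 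Injectivity of $\Spec F^e_\alpha$ on points also follows, since $\mathfrak q = \{ c \in C : c^q \in \mathfrak q \}$ and $c^q \in \im F^e_\alpha$, so $\mathfrak q$ is recovered from $\mathfrak p$.

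The one property not handled directly by the affine computation is surjectivity, and this is where I would invoke the global structure displayed in the fibered-product diagram, namely the factorization $F^e_X = G^e_f \circ F^e_f$. The absolute Frobenius $F^e_S$ is (universally) bijective—it is the identity on the underlying space and acts as the field Frobenius on residue fields—so its base change $G^e_f \colon X^{(q)} \to X$ is again bijective, bijectivity being stable under base change. Since $F^e_X$ is itself surjective and $G^e_f$ is injective, the equality $F^e_X = G^e_f \circ F^e_f$ forces $F^e_f$ to be surjective.

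Assembling these facts, $F^e_f$ is integral, surjective, injective on points, and induces purely inseparable residue field extensions; the last two properties together say that it is universally injective (radicial), and it is standard that an integral, universally injective, surjective morphism is a universal homeomorphism. I expect the main obstacle—modest as it is—to be the surjectivity step, since it is the only one that genuinely uses the global geometry through the factorization by the absolute Frobenius: the kernel of $F^e_\alpha$ need not vanish, so surjectivity cannot simply be read off the ring map. The remaining bookkeeping, namely the passage from \emph{injective plus purely inseparable residue fields} to \emph{universally injective}, and thence to \emph{universal homeomorphism}, is routine.
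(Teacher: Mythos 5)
Your proof is correct, but it takes a genuinely different route from the paper, which in fact offers no argument at all here: the proposition is quoted directly from the Stacks Project \cite[\href{https://stacks.math.columbia.edu/tag/0CCB}{Tag 0CCB}]{stacks-project}. The proof there is essentially formal: it exploits \emph{both} identities $G^e_f \circ F^e_f = F^e_X$ and $F^e_f \circ G^e_f = F^e_{X^{(q)}}$ (the ``mutual inverses up to Frobenius'' structure the paper displays in its iteration diagram and reuses in the proof of \autoref{NoetherianUnramifiedRelFClosedImmersion}), together with the fact that absolute Frobenii are integral universal homeomorphisms, and then transfers every asserted property to $F^e_f$ by standard cancellation and permanence arguments for compositions. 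Your hybrid argument---a direct affine computation built on the observation $c^q = F^e_\alpha(a \otimes F^e_*1) \in \im F^e_\alpha$ for integrality, pure inseparability, and injectivity on points, with the single factorization $F^e_X = G^e_f \circ F^e_f$ reserved for surjectivity---is more hands-on and makes the role of characteristic $p$ explicit; its cost is the (routine, but glossed) locality bookkeeping in the reduction to affines, which in the paper's own framework is supplied by \autoref{pro.pristinezariskilocal}.

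Two corrections. First, ``bijectivity being stable under base change'' is not a valid principle: $\Spec \bC \to \Spec \bR$ is bijective, yet its base change along itself is not injective. What you need---and what your parenthetical ``(universally)'' implicitly uses---is that surjectivity and radiciality (injectivity on points together with purely inseparable residue field extensions, equivalently universal injectivity) are each stable under base change; $F^e_S$ has both properties (your own affine computation, applied to $S \to \Spec \bF_p$, proves exactly this), hence its base change $G^e_f$ is bijective. With that rewording the step is sound. Second, your ``main obstacle'' is no obstacle: surjectivity \emph{can} be read off the ring map, because $\ker F^e_\alpha$ is nil. Indeed, since $q$-th powers are additive in characteristic $p$, any $z = \sum_i a_i \otimes F^e_* r_i$ in the kernel satisfies
\[
z^q \;=\; \sum_i a_i^q \otimes F^e_*(r_i^q) \;=\; \sum_i a_i^q \otimes \bigl(r_i \cdot F^e_* 1\bigr) \;=\; \Bigl(\sum_i a_i^q\,\alpha(r_i)\Bigr) \otimes F^e_* 1 \;=\; 0,
\]
the last equality because $F^e_\alpha(z) = F^e_*\bigl(\sum_i a_i^q \alpha(r_i)\bigr) = 0$ forces $\sum_i a_i^q \alpha(r_i) = 0$ in $A$. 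An integral ring map with nil kernel is surjective on spectra: apply lying over to the injection of $B/\ker F^e_\alpha$ into $C$ (in your notation), and note that $\Spec(B/\ker F^e_\alpha) \to \Spec B$ is a homeomorphism. This keeps the entire proof affine and elementary, and it is the same mechanism---ideals killed by Frobenius---that the paper later isolates in its notion of $p$-infinitesimal deformations.
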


\begin{remark}[Stability under base change and naturality]
\label{rem.relFrobstableunderbasechange}
The map $F^e_{f} \: X \to X^{(q)}$ is a morphism of $S$-schemes and its base change along a morphism $S' \to S$ is $F^e_{f'} \: X' \to X'^{(q)}$, where $f' \coloneqq f \times_S S' \: X' \to S'$. Also, the formation of relative Frobenius is natural in the following sense. Consider a commutative triangle
\[
\xymatrix{
Y \ar[dr]_-{g} \ar[rr]^-{h} && X \ar[dl]^-{f} \\
 &S&
}
\]
Then,
\begin{equation} \label{eqn.NaturalityFrob}
F^e_{g} =   \bigl( F^e_{f} \times_X Y \bigr) \circ F^e_{h}
\end{equation}
where $F^e_{f} \times_X Y$ is the base change of $F^e_{f}$ along $Y \to X$ as a morphism of $X$-schemes.
\end{remark}

    The base change of $F^e_f\: X \to X^{(q)}$ along $F^{e'}_S \: S \to S$ is the map
    \[
    F^e_{f^{(q')}} \: X^{(q')} \to X^{(qq')}.
    \]
    Its composition with $F_f^{(q')}\: X \to X^{(q')}$ is $F_f^{e+e'} \: X \to X^{(qq')}$. In particular, for a property $\mathcal{P}$ of morphisms that is stable under base change and closed under composition (finiteness, flatness, being an isomorphism), if $F^e_f$ satisfies $\mathcal{P}$, then so does $F^{ne}_f$ for all $n\geq 1$. Likewise, if $\mathcal{P}$ is further inherited to right factors in compositions (e.g., finiteness conditions), then we have that $F_f^e$ satisfying $\mathcal{P}$ for all $e>0$ or some $e>1$ are equivalent notions. This takes us to the following notion.

    \begin{definition}[{$F$-finiteness \cf \cite{HashimotoFfinitenessAndItsdescent}}]
We say that $f\: X \to S$ is \emph{$F$-finite} if $F^e_f$ is finite (and necessarily surjective) for some (equivalently all) $e \geq 1$. Similarly, $f\: X \to S$ is \emph{fiberwise $F$-finite} if the fibers $X_s$ are $F$-finite schemes for all $s \in S$. 
\end{definition}

\begin{remark}
     Observe that $X$ is $F$-finite if and only if $f$ and $S$ are $F$-finite. Also, $f$ is fiberwise $F$-finite if it is $F$-finite. For more on $F$-finiteness, see \cite{HashimotoFfinitenessAndItsdescent}
\end{remark}

\begin{remark}[Kählerianity vs $F$-finiteness]
   Recall that a morphism of schemes $f\: X \to S$ is said to be \emph{kählerian} if $\Omega_f$ is (a quasi-coherent sheaf) of \emph{finite generation} \cite{KunzKahlerDifferentials,SeydiRestucciaBonanzingaCangemiUtanoSurLaTheorieIII}.\footnote{By \emph{quasi-coherent sheaf of finite generation}, we mean that $\Omega_f(U)$ is a finitely generated $\sO_X(U)$-module for all affine open subsets $U\subset X$. This is the condition (iii) in \cite[Chapter 5, Proposition 1.11]{LIUAGAC}, where it is stated to be weaker than coherency but equivalent to it if $X$ is locally noetherian.}  This includes the class of morphisms essentially of finite type. Since $ \Omega_f = \Omega_{F_f}$ (\cite[\href{https://stacks.math.columbia.edu/tag/0CCC}{Tag 0CCC}]{stacks-project}), this would further include the class of $F$-finite morphisms. Moreover, $\Omega_f$ is a coherent sheaf provided that $f$ is $F$-finite and $X$ is locally noetherian. Conversely, a kählerian morphism is $F$-finite provided that $F_f$ is a morphism between locally noetherian schemes. This is a statement claimed by Fogarty \cite[Proposition 1]{FogartyKahlerHilbert14thproblem} only assuming that $X$ is locally noetherian (i.e., without assuming locally noetherianity for the scheme-theoretic image of $F_f$). Fogarty's proof, however, is flawed. Fortunately, a valid proof has been recently worked out by Ma--Polstra in \cite[Chapter 11]{polstramafsingularities}, where the reader can find more details.\end{remark}

Next, we want to show that $F_f$ is an isomorphism if and only if $F_f^e$ is an isomorphism for some $e > 0$. To argue this and see what happens for flatness, consider the following commutative diagram, which illustrates how the relative Frobenius behaves under iteration.
    \[
   \xymatrixcolsep{5pc} \xymatrix{
    \ddots  \ar[rd]^-{F_X} &  &  &  &  \\
    \cdots \ar[r]^-{G_f} & X \ar[rd]^-{F_X} \ar[d]^-{F_f} &  &  &  \\
    \cdots \ar[r] ^-{G_{f^{(p)}}} & X^{(p)} \ar[rd]^-{F_{X^{(p)}}} \ar[r] \ar[d]^-{F_{f^{(p)}}} \ar[r]^-{G_f} & X \ar[rd]^-{F_X} \ar[d]^-{F_f} &  &  \\
     \cdots \ar[r]^-{G_{f^{(p^2)}}} & X^{(p^2)} \ar[rd]^-{F_{X^{(p^2)}}}\ar[r] \ar[d]^-{F_{f^{(p^2)}}} \ar[r]^-{G_{f^{(p)}}} & X^{(p)} \ar[rd]^-{F_{X^{(p)}}} \ar[r] \ar[d]^-{F_{f^{(p)}}} \ar[r]^-{G_f} & X \ar[rd]^-{F_X} \ar[d]^-{F_f} &  \\
    \cdots \ar[r]^-{G_{f^{(p^3)}}} & X^{(p^3)} \ar[r] \ar[d]^-{f^{(p^3)}} \ar[r]^-{G_{f^{(p^2)}}} & X^{(p^2)} \ar[r] \ar[d]^-{f^{(p^2 )}} \ar[r]^-{G_{f^{(p)}}} & X^{(p)} \ar[r] \ar[d]^-{f^{(p)}} \ar[r]^-{G_f} & X \ar[d]^-{f}  \\
   \cdots \ar[r]^-{F_S} & S \ar[r]^-{F_S} & S \ar[r]^-{F_S} & S \ar[r]^-{F_S} & S
    }
    \]
    In this diagram, all the squares are cartesian. This diagram has the salient feature of showing that $F_f$ and $G_f$ are ``mutual inverses up to Frobenius'' as coined in \cite[Chapter 11]{polstramafsingularities}. This will play an important role later in the proof of \autoref{NoetherianUnramifiedRelFClosedImmersion}. As for now, observe that we also obtain that
    \[
    F^e_f = F_{f^{(p^{e-1})}}\circ \cdots \circ F_{f^{(p^2)}}\circ F_{f^{(p)}} \circ F_f
    \]
    and likewise
    \[
    G^e_f = G_{f^{(p^{e-1})}}\circ \cdots \circ G_{f^{(p^2)}}\circ G_{f^{(p)}} \circ G_f.
    \]
    By taking (inverse/projective) limits as $e \rightarrow \infty$, we conclude that
    \[
    F_f^{\infty} = \lim_e F_f^e
    \]
    where $F_f^{\infty}$ is the \emph{perfection} of $f$, which is defined by the following cartesian square
    \[
\xymatrixcolsep{6pc}\xymatrix{
X_{\mathrm{perf}} \ar@/_/[ddr]_-{f_{\mathrm{perf}}} \ar@/^/[drr]^-{F^{\infty}_X} \ar@{.>}[dr]|-{F^{\infty}_{f}}\\
&X_{S_{\mathrm{perf}}} \ar[d]_-{f_{S_{\mathrm{perf}}}} \ar[r]_-{G^{\infty}_f = \lim_e G_f^e}  & X\ar[d]^-{f} \\
&S_{\mathrm{perf}} \ar[r]^-{F^{\infty}_S} &S}
\]
Furthermore, this implies that the map
\[
F_{f^{(p^{\infty})}} \coloneqq \lim_e F_{f^{(q)}} = (F_f)_{X_{S_{\mathrm{perf}}}} = F_{X_{S_{\mathrm{perf}}}} \: X_{S_{\mathrm{perf}}} \to X_{S_{\mathrm{perf}}} 
\]
fits into the following commutative diagram
\[
\xymatrix{
X_{\mathrm{perf}}\ar[rd]_-{F_f^{\infty}} \ar[r]^-{F_f^{\infty}} & X_{S_\mathrm{perf}} \ar[d]^-{F_{f^{(p^{\infty})}}} \\
& X_{S_\mathrm{perf}}
}
\]
There is one more factorization of $F_{f^{(p^{\infty})}}$ that we obtain from this, namely
\[
\xymatrixcolsep{5pc}\xymatrix{
X_{\mathrm{perf}}\ar[rd]_-{F_f^{\infty}} \ar[r]^-{(F^e_{f})_{\mathrm{perf}}} & X^{(q)}_\mathrm{perf} \ar[d]^-{F^{\infty}_{f^{(q)}}} \\
& X_{S_\mathrm{perf}}
}
\]
whose limit as $e \rightarrow \infty$ yields
\begin{equation} \label{eqn.DiagramRelativePerfection}
    \xymatrixcolsep{5pc}\xymatrix{
X_{\mathrm{perf}}\ar[rd]_-{F_f^{\infty}} \ar[r]^-{(F^{\infty}_f)_{\mathrm{perf}}} & (X_{S_\mathrm{perf}})_{\mathrm{perf}} \ar[d]^-{F^{\infty}_{X_{S_{\mathrm{perf}}}} = \lim_e   F^{\infty}_{f^{(q)}}} \\
& X_{S_\mathrm{perf}}
}
\end{equation}

With the above in place, we readily conclude the following.

\begin{proposition} \label{pro.FrobBeingAnIso}
    With notation as above, consider the following statements:
    \begin{enumerate}
        \item $F_f$ is an isomorphism.
        \item $F^e_f$ is an isomorphism for all $e>0$.
        \item $F^e_f$ is an isomorphism for some $e>0$.
        \item $F_f^{\infty}$ is an isomorphism.
        \item $X_{S_{\mathrm{perf}}}$ is perfect.
    \end{enumerate}
    Then, the following implications hold:
    \[
    (a) \Longleftrightarrow (b)  \Longleftrightarrow (c)  \Longrightarrow (d) \Longrightarrow (e) .
    \]
    Moreover, $(e)\Longrightarrow (a)$ if $F_S \: S \to S$ is flat.
\end{proposition}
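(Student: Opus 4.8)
The plan is to settle $(a)\Leftrightarrow(b)\Leftrightarrow(c)$ by hand, then climb to $(d)$ and $(e)$ through the two perfection diagrams, and finally come back down to $(a)$ by descent once $F_S$ is assumed flat. The implication $(a)\Rightarrow(b)$ is exactly the principle recorded right before the statement, applied with $e=1$: isomorphy is stable under base change and composition, and the iteration diagram writes $F^e_f=F_{f^{(p^{e-1})}}\circ\cdots\circ F_f$ with each $F_{f^{(p^j)}}$ a base change of $F_f$, so $F_f$ being an isomorphism forces every $F^e_f$ to be one. The implication $(b)\Rightarrow(c)$ is trivial.

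The only delicate unconditional step is $(c)\Rightarrow(a)$, because being an isomorphism is \emph{not} inherited by the right factors of a composite, so it cannot simply be read off the factorization. Since relative Frobenii are affine by \autoref{lem.RelativveFrobeniiUniversalHomeos}, I would argue on structure sheaves over an affine cover. There the comorphism of $F^e_f$ is a composite of ring maps whose final arrow (the one landing in $\sO_X$) is the comorphism of $F_f$. As $F^e_f$ is an isomorphism, this composite is surjective, hence so is its final arrow; thus $F_f$ is a closed immersion, and therefore so is every $F_{f^{(p^j)}}$, being a base change of $F_f$ (the squares in the iteration diagram are cartesian, \cf \autoref{rem.relFrobstableunderbasechange}). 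Consequently the comorphism of $F^e_f$ exhibits a composite of \emph{surjective} ring maps that is moreover bijective; peeling off the factors one at a time, starting from the one applied first, shows each factor is injective, hence bijective. In particular the comorphism of $F_f$ is bijective, so $F_f$ is an isomorphism.

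For $(c)\Rightarrow(d)$ I would pass to the limit. Having $(c)\Leftrightarrow(b)$, all $F^e_f$ are isomorphisms, and iterating the single-step ``mutual inverses up to Frobenius'' identities gives $G^e_f\circ F^e_f=F^e_X$ and $F^e_f\circ G^e_f=F^e_{X^{(q)}}$; these, together with the naturality of the absolute Frobenius, make $\{F^e_f\}_e$ a morphism between the two inverse systems whose limits are $X_{\mathrm{perf}}$ and $X_{S_{\mathrm{perf}}}$, levelwise an isomorphism, so the induced map $F^\infty_f$ on limits is an isomorphism. For $(d)\Rightarrow(e)$ I would read diagram~\eqref{eqn.DiagramRelativePerfection}, which factors $F^\infty_f=F^\infty_{X_{S_{\mathrm{perf}}}}\circ(F^\infty_f)_{\mathrm{perf}}$. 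If $F^\infty_f$ is an isomorphism, so is its perfection $(F^\infty_f)_{\mathrm{perf}}$, and cancelling forces the canonical perfection morphism $F^\infty_{X_{S_{\mathrm{perf}}}}$ to be an isomorphism, which is exactly the assertion that $X_{S_{\mathrm{perf}}}$ is perfect.

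Finally, the conditional $(e)\Rightarrow(a)$ is where flatness enters, through faithfully flat descent. By \autoref{rem.relFrobstableunderbasechange} the base change of $F_f$ along the perfection $F^\infty_S\:S_{\mathrm{perf}}\to S$ is $F_{f^{(p^\infty)}}$, the relative Frobenius of $X_{S_{\mathrm{perf}}}$ over the perfect base $S_{\mathrm{perf}}$, which coincides with the absolute Frobenius $F_{X_{S_{\mathrm{perf}}}}$; thus $(e)$ says precisely that this base change is an isomorphism. Since $F_S$ is flat, every $F^e_S$ is flat, and hence $S_{\mathrm{perf}}\to S$---a filtered colimit on rings of the $F^e_S$---is flat; it is also integral and surjective, hence affine and faithfully flat, thus fpqc. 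As isomorphy descends along fpqc morphisms, $F_f$ is an isomorphism, giving $(a)$. I expect the main obstacle to be $(c)\Rightarrow(a)$: the surjection-and-base-change cascade that upgrades an isomorphism of a higher Frobenius power back to the first one, whereas the flatness hypothesis only intervenes to license the descent in $(e)\Rightarrow(a)$.
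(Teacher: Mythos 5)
Your proposal is correct and takes essentially the same route as the paper: for $(c)\Rightarrow(a)$ both proofs first show $F_f$ is a closed immersion (the paper via a left inverse plus affineness, you via surjectivity of the composite comorphism, which is the same observation) and then use the cartesian iteration squares to make the remaining factors closed immersions and cancel at the ring level, while for $(e)\Rightarrow(a)$ both descend isomorphy along the fpqc morphism arising from the perfection of $S$. The details you supply for $(c)\Rightarrow(d)$ and $(d)\Rightarrow(e)$ are precisely the limit and functoriality facts that the paper's proof treats as immediate from its preceding discussion.
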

\begin{proof}
    It remains to explain why (c) implies (a) as well as why (e) implies (a) if $F_S$ is flat. For the former, suppose that $F_f^e$ is an isomorphism. Since $F_f^e = F^{e-1}_{f^{(p)}} \circ F_f$, we conclude that $F_f$ admits a left inverse. Since $F_f$ is affine, this implies that $F_f$ is a closed immersion. Hence, so is its base change $F^{e-1}_{f^{(p)}}$. From this, we see that if $F_f$ is not an isomorphism, neither is $F_f^e$. 

    Finally, we explain why (e) implies (a) if $F_S$ is flat.  This follows from our observation above that the base change of $F_f$ along $X^{(p)} \from X_{S_{\mathrm{perf}}}$ is none other than the Frobenius map of $X_{S_{\mathrm{perf}}}$. If $F_S$ is flat, then $X^{(p)} \from X_{S_{\mathrm{perf}}}$ is an fpqc covering, and one simply descends the property of being an isomorphism.
\end{proof}

\begin{remark} \label{rem.RelativePerfectionBeingIsoDoesNotImpolyFrobIsISo}
It is not true that in \autoref{pro.FrobBeingAnIso}
(e) nor (d) imply (a) without assuming that $F_S$ is flat. Consider, for example, the homomorphism $\bF_p[x]/x^2 \to \bF_p$ sending $x$ to $0$. Also see \cite[Remark 2.2]{DumitrescuRegularityFiniteFlatDimension}.
\end{remark}

As for flatness, we can say the following.

\begin{proposition} \label{pro.FrobBEingFlat}
    With notation as above, consider the following statements:
    \begin{enumerate}
        \item $F_f$ is flat.
        \item $F^e_f$ is flat for all $e>0$.
        \item $F^e_f$ is flat for some $e>0$.
        \item $F_f^{\infty}$ is flat.
        \item $F_{X_{S_{\mathrm{perf}}}}$ is flat.
    \end{enumerate}
    Then, the following implications hold:
    \[
    (a) \Longleftrightarrow (b)  \Longrightarrow (c)  \Longrightarrow (d) \Longrightarrow (e) .
    \]
    Moreover, $(e)\Longrightarrow (a)$ if $F_S \: S \to S$ is flat.
\end{proposition}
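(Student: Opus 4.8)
The plan is to run the argument of \autoref{pro.FrobBeingAnIso} with ``isomorphism'' replaced by ``flat'', exploiting that flatness is stable under base change and closed under composition; the genuinely new inputs are that flatness is preserved under the filtered colimits defining the perfection and that it descends along faithfully flat morphisms. Note that I do \emph{not} expect $(c)\Rightarrow(a)$ to hold, since flatness of a composite need not descend to a right factor---this is exactly why the flatness chain is only a chain of implications past $(b)$. The formal steps come first: for $(a)\Rightarrow(b)$, write $F^e_f=F_{f^{(p^{e-1})}}\circ\cdots\circ F_{f^{(p)}}\circ F_f$ and recall that each $F_{f^{(p^i)}}$ is the base change of $F_f$ along $F^i_S$, hence flat whenever $F_f$ is, so the composite $F^e_f$ is flat; the converse $(b)\Rightarrow(a)$ is the case $e=1$, and $(b)\Rightarrow(c)$ is immediate.

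For $(c)\Rightarrow(d)$ I would pass to rings, where $F^\infty_f$ is the map $A\otimes_R R_{\mathrm{perf}}\to A_{\mathrm{perf}}$. By the remark preceding the definition of $F$-finiteness, flatness of $F^e_f$ for a single $e$ forces flatness of $F^{ne}_f$ for all $n\ge 1$ (base change plus composition). Writing source and target as the cofinal filtered colimits $A\otimes_R R_{\mathrm{perf}}=\colim_n(A\otimes_R F^{ne}_*R)$ and $A_{\mathrm{perf}}=\colim_n F^{ne}_*A$, the map $F^\infty_f$ is the colimit of the maps $F^{ne}_\alpha$. Base-changing each flat $F^{ne}_\alpha$ along $A\otimes_R F^{ne}_*R\to A\otimes_R R_{\mathrm{perf}}$ should exhibit $A_{\mathrm{perf}}$ as a filtered colimit of flat $A\otimes_R R_{\mathrm{perf}}$-modules, and is therefore flat. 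The main obstacle is precisely this last identification: one must verify that the colimit of the base-changed modules genuinely reconstitutes $A_{\mathrm{perf}}$ with its $A\otimes_R R_{\mathrm{perf}}$-module structure, which is a careful bookkeeping with the transition maps of the perfection.

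For $(d)\Rightarrow(e)$ I would invoke the factorization recorded just before \eqref{eqn.DiagramRelativePerfection}, namely $F_{X_{S_{\mathrm{perf}}}}\circ F^\infty_f = F^\infty_f$, where $F_{X_{S_{\mathrm{perf}}}}=F_{f^{(p^\infty)}}$. Each $F^e_f$ is a universal homeomorphism by \autoref{lem.RelativveFrobeniiUniversalHomeos}, so their limit $F^\infty_f$ is again surjective (being a cofiltered limit of surjective integral maps); being flat by $(d)$, it is faithfully flat. Thus $F_{X_{S_{\mathrm{perf}}}}\circ F^\infty_f$ is flat with faithfully flat right factor $F^\infty_f$, and faithfully flat descent of flatness gives that $F_{X_{S_{\mathrm{perf}}}}$ is flat.

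Finally, $(e)\Rightarrow(a)$ under the hypothesis that $F_S$ is flat goes exactly as in \autoref{pro.FrobBeingAnIso}: $F_{X_{S_{\mathrm{perf}}}}$ is the base change of $F_f$ along $X_{S_{\mathrm{perf}}}\to X^{(p)}$, and when $F_S$ is flat this map is an fpqc covering, so flatness of $F_{X_{S_{\mathrm{perf}}}}$ descends to flatness of $F_f$ by fpqc descent.
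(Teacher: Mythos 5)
Your proof is correct and takes essentially the same route as the paper's: the formal implications come from stability of flatness under base change and composition, (c)$\Rightarrow$(d) is the filtered-colimit argument (the bookkeeping step you flag is exactly \cite[\href{https://stacks.math.columbia.edu/tag/05UT}{Tag 05UT}]{stacks-project}), (d)$\Rightarrow$(e) uses the general fact that if $h\circ g$ is flat and $g$ is faithfully flat then $h$ is flat, and (e)$\Rightarrow$(a) is fpqc descent of flatness, all as in the paper. The only cosmetic difference is that you apply the descent lemma to the triangle $F_{X_{S_{\mathrm{perf}}}}\circ F^{\infty}_f = F^{\infty}_f$ rather than to \autoref{eqn.DiagramRelativePerfection} as the paper suggests, which if anything lands on statement (e) slightly more directly.
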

\begin{proof}
The proof of (e) implies (a) if $F_S$ is flat is by fpqc descent as in \autoref{pro.FrobBeingAnIso}, but this time for flatness. Also see \cite[Lemma 2.3]{DumitrescuRegularityFiniteFlatDimension}. To see why (d) implies (e), use the diagram \autoref{eqn.DiagramRelativePerfection} together with the general fact that if a composition $h\circ g$ is flat and $g$ is \emph{faithfully} flat, then $h$ is flat.
\end{proof}

\begin{remark}
    The authors do not know whether $F_f^e$ being flat for some $e$ implies that $F_f$ is flat (without assuming that $F_S$ is flat). Further, the same example in \autoref{rem.RelativePerfectionBeingIsoDoesNotImpolyFrobIsISo} shows that $F_f^{\infty}$ being flat does not imply that $F_f$ is flat.
\end{remark}

The above leads to the following notion.

\begin{definition}[Pristinity and $F$-flatness] A morphism $f\: X \to S$ is \emph{pre-pristine} if $F^e_f$ is an isomorphism for some (equivalently all) $e \geq 1$. We say that $f$ is \emph{pristine} if it is flat and pre-pristine.\footnote{Naturally, the term to use is \emph{perfect}. However, it is employed for a different notion; see \cite[\href{http://stacks.math.columbia.edu/tag/06BY}{Tag 06BY}]{stacks-project}. Thus we decided to use a synonym. To the best of the authors' knowledge, pristine morphisms have not been named before other than saying that $X/S$ is relatively perfect; see \cite{FINKRelativeInverseLimitPerfection}.} We say that $f$ is \emph{$F$-flat} if it is flat and $F_f^e$ is flat for all $e \geq 1$ (equivalently for $e=1$).
\end{definition}

\begin{remark} \label{rem.WeakiningsOnF_fForPrisitinity}
Recall that $F_f$ is a universal homeomorphism and thus it is surjective (see \autoref{lem.RelativveFrobeniiUniversalHomeos}). Then, it is an isomorphism if and only if it is an open immersion. On the other hand, flat closed immersions between locally noetherian schemes are open immersions; see \cite[I, Proposition 3.10]{MilneEtaleCohomology} or \cite[\href{https://stacks.math.columbia.edu/tag/05KK}{Lemma 05KK}]{stacks-project}. Thus, if $X^{(p)}$ is locally noetherian, an $F$-flat morphism $f$ is pristine if and only if $F_f$ is a closed immersion. This will be important in \autoref{sec.NoetherianPristinity}.
\end{remark}

\begin{example}[Étaleness vs pristinity]
    \'Etale morphisms are pristine; see \cite[\href{http://stacks.math.columbia.edu/tag/0EBS}{Tag 0EBS}]{stacks-project}. We will see below (see \autoref{rem.LTrivialImpliesFormallyEtale}) that \'etale morphisms are precisely the pristine morphisms of finite type between locally noetherian schemes.
\end{example}

\begin{example}[Pristine field extensions]
    A field extension $ K \to L$ is pristine if and only if it is separable with the empty set as a $p$-basis. Indeed, by \cite[\href{https://stacks.math.columbia.edu/tag/0322}{Proposition 0322}]{stacks-project}, $F_{L/K}$ is injective if and only if $L/K$ is separable (i.e., formally smooth). On the other hand, $F_{L/K}$ is surjective if and only if $K[L^p] = L$ which, by definition, is to say that $\emptyset$ is a $p$-basis (\cite[\href{https://stacks.math.columbia.edu/tag/07P2}{Lemma 07P2}]{stacks-project}).
Since the image of any $p$-basis under the universal derivation $\mathrm{d}\:L \to \Omega_{L/K}$ is a basis of $\Omega_{L/K}$ we conclude that $L/K$ is pristine if and only if it is formally \'etale (see \cite[\href{https://stacks.math.columbia.edu/tag/00UO}{Lemma 00UO}]{stacks-project}, \cite[\href{https://stacks.math.columbia.edu/tag/0322}{Proposition 0322}]{stacks-project}). In particular, for any field $\kay$, the extension $\kay \subset \kay(t^{1/p^\infty})$ is pristine. It is worth noting that, more generally, we have that $F_{L/K}$ is injective (resp. surjective) if and only if $\Omega_{K/\bF_p} \otimes_K L \to \Omega_{L/\bF_p}$ is injective (resp. surjective), which means that $L/K$ is formally smooth (resp. formally unramified). We intend to explore to what extent this principle holds for arbitrary morphisms of schemes.
\end{example}

\begin{example}[Local completions]
\label{pro.completionpristineFfinite}
Let $(R, \fram,\kay)$ be a noetherian $F$-finite local ring. The completion $\theta\: R \to \hat{R}$ is pristine. Indeed, since $\theta$ is flat (as $R$ is noetherian), we only need to show that $F_{\theta}$ is an isomorphism. Since $R$ is also $F$-finite, we have that $\hat{R}_{F_*R}$ is the $\fram^{[p]}$-adic completion of $R$, which coincides with the $\fram$-adic completion as $R$ is noetherian.
\end{example}

\subsection{Regularity vs $F$-flatness}

 We recall the following theorem by Radu--Andr\'e (also known as the relative Kunz theorem). See \cite[\href{https://stacks.math.columbia.edu/tag/07R6}{Tag 07R6}]{stacks-project} for details on regular morphisms.

\begin{theorem}[{\cite{RaduRelativeKunz,AndreRelativeKunz}, \cf \cite[\S 2]{EnescuOnTheBehaviorOfFrationalRingsUnderFlatBaseChange}}]
\label{theo.andreraduthm}
Let $f\: X \to S$ be a morphism between locally noetherian schemes. Then, $f$ is regular if and only if it is $F$-flat.
\end{theorem}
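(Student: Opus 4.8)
The plan is to use that both ``regular'' and ``$F$-flat'' presuppose that $f$ is flat (regularity meaning flat with geometrically regular fibers), so it suffices to prove the following for a \emph{flat} morphism $f \: X \to S$ of locally noetherian schemes: the fibers of $f$ are geometrically regular if and only if the relative Frobenius $F_f \: X \to X^{(q)}$ is flat. First I would record two structural facts: $f^{(q)} \: X^{(q)} \to S$ is flat, being the base change of $f$ along $F_S$, and $f = f^{(q)} \circ F_f$. Thus at a point $x \in X$ with image $s = f(x)$ and $x' = F_f(x)$ we obtain local homomorphisms of local rings $\sO_{S,s} \to \sO_{X^{(q)},x'} \to \sO_{X,x}$ in which the first map and the composite are both flat, and $F_f$ is flat at $x$ precisely when the second map is flat.

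The heart of the argument is a fibral reduction: I claim $F_f$ is flat if and only if, for every $s \in S$, the relative Frobenius $F_{f_s}$ of the fiber $f_s \: X_s \to \Spec \kappa(s)$ is flat. One implication is immediate from the stability of relative Frobenius under base change (\autoref{rem.relFrobstableunderbasechange}) together with preservation of flatness under base change. For the converse I would apply the local criterion of flatness over the noetherian base $\sO_{S,s}$: since $\sO_{S,s} \to \sO_{X^{(q)},x'}$ and $\sO_{S,s} \to \sO_{X,x}$ are flat, flatness of $\sO_{X^{(q)},x'} \to \sO_{X,x}$ is equivalent to flatness of the induced map modulo $\mathfrak{m}_s$, which is exactly a stalk of $F_{f_s}$. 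I expect this to be the main obstacle. The relative Frobenius is, in general, neither of finite type nor of finite presentation, and $X^{(q)}$ need not be locally noetherian -- already for the purely inseparable field extension $\kappa \to \kappa^{1/p}$ with $[\kappa : \kappa^p] = \infty$ the fiber product $\Spec(\kappa^{1/p} \otimes_\kappa \kappa^{1/p})$ fails to be noetherian -- so the classical fiber-wise flatness criterion of EGA does not apply verbatim. One must instead use the form of the local criterion whose noetherian hypotheses rest only on the base $\sO_{S,s}$ and on the top module $\sO_{X,x}$, and not on $\sO_{X^{(q)},x'}$.

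It remains to settle the fibral case, where the base is a field $k = \kappa(s)$: for $Y$ locally noetherian over $k$, one must show that $F_{Y/k}$ is flat if and only if $Y$ is geometrically regular over $k$. Here I would pass to the perfect closure $k \hookrightarrow k_\infty \coloneqq k^{1/p^\infty}$, which is faithfully flat. Since relative Frobenius commutes with base change, $F_{Y_\infty/k_\infty}$ with $Y_\infty \coloneqq Y \otimes_k k_\infty$ is the base change of $F_{Y/k}$, and fpqc descent reduces the flatness of $F_{Y/k}$ to that of $F_{Y_\infty/k_\infty}$. As $k_\infty$ is perfect, $F_{k_\infty}$ is an isomorphism, so $F_{Y_\infty/k_\infty}$ is identified with the absolute Frobenius of $Y_\infty$, whose flatness is equivalent to regularity of $Y_\infty$ by the classical Kunz theorem -- that is, to geometric regularity of $Y$ over $k$. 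The noetherianity required to invoke Kunz is managed by descending through the finite purely inseparable layers $k^{1/p^n}$ against which geometric regularity is tested, keeping every ring in sight noetherian. Assembling the fibral equivalence with the reduction above gives that $F_f$ is flat if and only if every fiber of $f$ is geometrically regular, i.e. if and only if $f$ is regular, which is the assertion of \autoref{theo.andreraduthm}.
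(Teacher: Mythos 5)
Your skeleton (flat on both sides, reduce flatness of $F_f$ to the fibers by a crit\`ere de platitude par fibres, then settle the field case by Kunz) is the standard one, but the step you yourself flag as ``the main obstacle'' is precisely where the theorem is hard, and the tool you invoke to cross it does not exist. For the implication ``all $F_{f_s}$ flat $\Rightarrow F_f$ flat'' you need a fibral flatness criterion for $\sO_{S,s} \to \sO_{X^{(q)},x'} \to \sO_{X,x}$ in which the \emph{middle} ring may fail to be noetherian and the top ring is not essentially of finite presentation over it. Every available form of the criterion excludes this situation: the Bourbaki/Matsumura local criterion of flatness (the engine behind the fibral criterion) requires the middle ring to be noetherian, since its proof uses Artin--Rees in that ring to obtain ideal separation of the top module; and the EGA IV 11.3.10 / \cite[\href{https://stacks.math.columbia.edu/tag/039A}{Section 039A}]{stacks-project} versions require either all three rings noetherian or finite-presentation hypotheses on $\sO_{X^{(q)},x'} \to \sO_{X,x}$. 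Relative Frobenius satisfies neither. There is no off-the-shelf ``form of the local criterion whose noetherian hypotheses rest only on the base and on the top,'' and asserting one begs the question: supplying the missing noetherianity of the middle ring is exactly the content of Andr\'e's theorem, quoted in the paper as \autoref{theo.raduandrecore} (if $f$ is regular then $X^{(q)}$ is locally noetherian), after which the ordinary noetherian fibral criterion applies. The paper makes this point explicitly right after \autoref{theo.andreraduthm}, and its own proof is a citation to \cite{polstramafsingularities}, where this noetherianity result is the core input.

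There is a second, more repairable, gap in your field case. You invoke Kunz on $Y_\infty = Y \otimes_k k^{1/p^\infty}$, but this scheme is generally not locally noetherian when $[k:k^p]=\infty$ (a phenomenon you note yourself for $\kappa^{1/p}\otimes_\kappa \kappa^{1/p}$), so Kunz does not apply to it and ``regularity'' of $Y_\infty$ is not even the right notion. Your closing remark that noetherianity ``is managed by descending through the finite purely inseparable layers'' is the correct idea but is not an argument. The direction ``$F_{Y/k}$ flat $\Rightarrow Y$ geometrically regular'' does descend easily: for finite purely inseparable $k'/k$, the absolute Frobenius of $Y\otimes_k k'$ is the composite of $F_{(Y\otimes_k k')/k'}$ (a base change of $F_{Y/k}$, hence flat) with the projection $(Y\otimes_k k')^{(p)} \to Y\otimes_k k'$ (a base change of the flat map $F_{k'}$), so Kunz applies to the noetherian ring $Y\otimes_k k'$. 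The converse ``geometrically regular $\Rightarrow F_{Y/k}$ flat'' needs genuine work at each layer: for $Y=\Spec A$ and $k\subseteq k'\subseteq k^{1/p}$ finite, one shows $F_*A$ is an $(A\otimes_k k')$-module direct summand of $F_*(A\otimes_k k')$, which is flat by Kunz, and then passes to the colimit $A^{(p)}\cong \colim_{k'} A\otimes_k k'$ by a Tor base-change argument along the flat transition maps. None of this appears in your write-up. In short, the field case can be completed along the lines you gesture at, but the fibral reduction in your second step is a genuine gap, and filling it is equivalent to the hard content of the Radu--Andr\'e theorem.
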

\begin{proof}
See \cite[Section 10]{polstramafsingularities} for a proof avoiding André--Quillen homology.
\end{proof}

Arguably, the most important ingredient in the proof of Radu--Andr\'e's theorem is the following theorem. For instance, from it, the $F$-flatness of regular morphisms follows from \emph{crit\'ere de platitude par fibres}; see \cite[\href{https://stacks.math.columbia.edu/tag/039A}{Section 039A}]{stacks-project}, \cf \cite[IV 5.9]{GrothendieckSGA}. This is another key ingredient in \autoref{sec.NoetherianPristinity}.

\begin{theorem}
\label{theo.raduandrecore}
Let $f \: X \to S$ be a regular morphism of locally noetherian schemes. Then, $X^{(q)}$ is locally noetherian.
\end{theorem}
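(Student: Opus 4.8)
The plan is to translate the statement into commutative algebra, dispose of the $F$-finite case by a soft finiteness argument, and isolate the non-$F$-finite case as the genuinely hard input.

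First I would reduce to the affine case. Being locally noetherian is local on $X^{(q)}$, and since $G^e_f\colon X^{(q)} \to X$ is affine (it is a base change of $F^e_S$), I may pull back an affine open cover and assume $S = \Spec R$, $X = \Spec A$ with $R, A$ noetherian and $\alpha\colon R \to A$ regular. The goal then becomes that $B \coloneqq A \otimes_R F^e_* R$ is noetherian. I would next reduce to $e = 1$: one has $(X^{(p^{e-1})})^{(p)} = X^{(p^e)}$, and the base change $f^{(p)}\colon X^{(p)} \to S$ of the regular morphism $f$ along $F_S$ is again regular, so granting the case $e=1$ one applies it successively to $f, f^{(p)}, f^{(p^2)}, \dots$, each of which is a regular morphism between locally noetherian schemes once the previous step is done. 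Finally, the object $B = A \otimes_R F_* R$ is exactly the source of the relative Frobenius $F_f = F_\alpha\colon B \to F_* A$; note that $F_* A$ is noetherian, being $A$ as a ring, and that $F_f$ is integral by \autoref{lem.RelativveFrobeniiUniversalHomeos}.

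When $S$ is $F$-finite the argument is soft: there $F_* R$ is a finite $R$-module, hence $B = A \otimes_R F_* R$ is a finite $A$-module, and a module-finite algebra over the noetherian ring $A$ is noetherian. This already covers every $F$-finite scheme $X$, and it pinpoints where the difficulty must sit, namely in the failure of $F$-finiteness, i.e.\ when $R$ (equivalently, its residue fields) admits an infinite $p$-basis.

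The hard part is therefore the general case, and here the soft routes are blocked: $F_f$ is integral but not module-finite, so Eakin--Nagata does not apply, and while descending noetherianity along $F_f$ would suffice if $F_f$ were faithfully flat, the flatness of $F_f$ is precisely the content of Radu--Andr\'e (\autoref{theo.andreraduthm}), toward which this theorem is a step, so invoking it would be circular. Instead I would argue fibrewise. The morphism $f^{(q)}\colon X^{(q)} \to S$ is flat, being a base change of $f$, and its fibre over $s \in S$ is the Frobenius base change $X_s \otimes_{\kappa(s), F^e} \kappa(s)$ of the geometrically regular $\kappa(s)$-algebra $X_s$. The problem thus concentrates into a statement over a field: for a possibly imperfect, non-$F$-finite field $k$, the Frobenius base change of a geometrically regular $k$-algebra is noetherian. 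This is the step that genuinely requires $F$-finiteness to be replaced by the structure theory of geometric regularity through $p$-bases---the corrected forms of Fogarty's and Tyc's theorems established by Ma--Polstra \cite[Section 10]{polstramafsingularities}. I expect this $p$-basis analysis, together with the accompanying passage (after localizing and completing the base, which brings Cohen's structure theorem into play) from noetherianity of the fibres back to noetherianity of the total ring $B$, to be the main obstacle; it is exactly the input that the surrounding development is organized to import rather than to reprove.
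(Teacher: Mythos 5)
Your preliminary reductions are all sound: localizing to the affine case via the affineness of $G^e_f$, the iteration reducing to $e=1$, the soft disposal of the $F$-finite case, and the correct observations that Eakin--Nagata is unavailable (since $F_\alpha$ is integral but not module-finite) and that invoking flatness of $F_\alpha$ would be circular. Note also that your final posture—import the hard content from \cite[Section 10]{polstramafsingularities}—is in substance what the paper itself does: its entire proof of this theorem is the citation to \cite[Th\'eor\`eme 25 and Demonstration 37]{AndreAutreDemosntrationRelKunz} and \cite[Section 10]{polstramafsingularities}, so nothing you defer is something the paper proves.

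The genuine gap is the one structural idea you add beyond the citation, namely that ``the problem thus concentrates into a statement over a field.'' It does not, and the step you call the ``accompanying passage from noetherianity of the fibres back to noetherianity of the total ring $B$'' is not an accompanying passage but the whole theorem—and, worse, it is false as a general implication, so it cannot be imported from anywhere. Flatness over a noetherian base together with noetherian (even geometrically regular) fibres does not imply the total ring is noetherian. Concretely, let $R$ be a DVR with uniformizer $t$ and fraction field $K$, and set $B = R + xK[x] \subset K[x]$. Then $B$ is flat over $R$ (torsion-free over a PID), its generic fibre $B \otimes_R K = K[x]$ and its closed fibre $B/tB \cong R/tR$ are both regular and noetherian, yet the chain $(x) \subsetneq (x,\, x/t) \subsetneq (x,\, x/t,\, x/t^2) \subsetneq \cdots$ shows $B$ is not noetherian. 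So fibrewise data on $f^{(q)}$ plus flatness can never yield the conclusion; any proof must exploit the specific structure of $A \otimes_R F^e_* R$ and the regularity of $\alpha$ itself. This is indeed how the cited sources argue—working with the total ring directly (localization and completion of the base, Cohen's structure theorem, $p$-basis arguments reducing to a situation where noetherianity can be descended along a faithfully flat map)—not by globalizing a field-fibre statement. A smaller slip: the corrected Fogarty--Tyc results you invoke live in \cite[Chapter 11]{polstramafsingularities} and concern K\"ahler differentials (they feed into \autoref{NoetherianUnramifiedRelFClosedImmersion}), not into this noetherianity statement.
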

\begin{proof}
See \cite[Th\'eor\`eme 25 and Demonstration 37]{AndreAutreDemosntrationRelKunz} or \cite[Section 10]{polstramafsingularities}.
\end{proof}

\begin{remark}
Since regular morphisms are flat, pre-pristine morphisms are pristine provided that both source and target schemes are locally noetherian. In \autoref{ex.PrePristineNotPristine} below, we will encounter a (non-noetherian) example of a morphism that is pre-pristine but not pristine. In particular, (local) noetherianity is a fundamental hypothesis in Andr\'e's proof that flatness of relative Frobenius implies flatness of the given morphism.
\end{remark}

\subsubsection{Beyond the noetherian case}
\label{rem.formallysmoothcharacterization} 
Let $f\: X \to S$ be a morphism between locally noetherian schemes. It is possible to characterize its regularity using differentials or, more precisely, using its cotangent complex $\bL_f$. Namely, $f$ is regular if and only if $H_1(\bL_f) = 0$ and $H_0(\bL_f)\coloneqq \Omega_f$ is flat; see \cite[Proposition 5.6.2]{Majadas2010}. However, this last differential condition makes sense beyond the noetherian case, and we may refer to it as \emph{homological regularity}. In particular, a general morphism $g$ is formally smooth (resp. formally étale) if and only if it is homologically regular and $\Omega_g$ is locally projective (resp. trivial); see \cite[Theorem 2.3.1, Proposition 5.6.2]{Majadas2010}, \cf \cite[\href{https://stacks.math.columbia.edu/tag/06B5}{Tag 06B5}]{stacks-project}. In this way, for an $F$-finite map, homological regularity and formal smoothness are the same notion. It is worth noting that recently Majadas--Alvite--Barral \cite{MajadasAlviteBarralFormalRegularity} introduced a notion of \emph{formal regularity} for general morphisms that coincides with regularity in the noetherian case. They prove that $F$-flat maps are formally regular. However, this notion of formal regularity is weaker than ours of homological regularity and it does not seem to imply that $\Omega_f$ is flat, which we believe is essential. Thus, we may ask:
\begin{question}
    Are $F$-flat morphisms homologically regular?
\end{question}
Since $F_X$ is flat for an $F$-flat map $f\: X \to S$, fpqc descent and \autoref{pro.Pre-pristineHavetrivialCotangentComplex} imply that this is the same as asking whether $F$-flat maps satisfy that $H_2(\bL_{F_f^{\infty}}) = 0$ and $H_1(\bL_{F_f^{\infty}})$ is flat. By the Radu--André theorem, this has a positive answer when $X$ and $S$ are both locally noetherian.

\section{On Pristinity} \label{sec.Pristinity}

In this section, we collect some general, basic observations on the class of pristine morphisms.

\begin{proposition}
\label{prepristineproperties}
Let $S$ be a scheme. The class of (pre-)pristine $S$-schemes is:
\begin{enumerate}[label=(\alph*)]
\item stable under any base change $S'\to S$,
\item closed under composition, further, an $S$-morphism of pre-pristine $S$-schemes is pre-pristine, and
\item closed under product, i.e., if $X/S$ and $Y/S$ are pristine $S$-schemes, then so is the $S$-scheme $X \times_S Y$.
\end{enumerate}

\end{proposition}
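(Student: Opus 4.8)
The plan is to deduce all three statements from the two structural features of the relative Frobenius recorded in \autoref{rem.relFrobstableunderbasechange}: its compatibility with base change, and the naturality formula \eqref{eqn.NaturalityFrob}. Throughout, since pre-pristinity means that $F^e_f$ is an isomorphism for \emph{all} $e \geq 1$ (by \autoref{pro.FrobBeingAnIso}), I may fix a single $e$ and work with it uniformly; I will also freely use that both isomorphisms and flat morphisms are stable under base change and closed under composition.

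For (a), let $f \: X \to S$ be pre-pristine and let $S' \to S$ be arbitrary, with $f' = f \times_S S' \: X' \to S'$. By \autoref{rem.relFrobstableunderbasechange}, the base change of $F^e_f$ along $S' \to S$ is exactly $F^e_{f'}$; since $F^e_f$ is an isomorphism and isomorphisms are stable under base change, $F^e_{f'}$ is an isomorphism and $f'$ is pre-pristine. If moreover $f$ is flat, then so is $f'$, which gives the pristine case.

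For (b) I would run the naturality formula \eqref{eqn.NaturalityFrob} in both directions. Given a commutative triangle with $h \: Y \to X$, $f \: X \to S$ and $g = f \circ h$, that formula reads $F^e_g = (F^e_f \times_X Y) \circ F^e_h$, where $F^e_f \times_X Y$ is the base change of $F^e_f$ along $Y \to X$. If $h$ and $f$ are pre-pristine, then $F^e_h$ is an isomorphism and $F^e_f \times_X Y$ is an isomorphism (being a base change of the isomorphism $F^e_f$), so their composite $F^e_g$ is an isomorphism and $g$ is pre-pristine; adding that a composite of flat maps is flat handles pristinity, proving closure under composition. For the second assertion, suppose instead that the $S$-schemes $X/S$ and $Y/S$ are pre-pristine and that $h$ is an $S$-morphism, so $g = f \circ h$. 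Then $F^e_g$ and $F^e_f$ are isomorphisms, hence so is $F^e_f \times_X Y$, and the same formula forces $F^e_h = (F^e_f \times_X Y)^{-1} \circ F^e_g$ to be an isomorphism; thus $h$ is pre-pristine.

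Part (c) then follows formally from (a) and (b): I would factor the structure map of $X \times_S Y$ as $X \times_S Y \xrightarrow{\mathrm{pr}_X} X \xrightarrow{f} S$, note that $\mathrm{pr}_X$ is the base change of the pristine morphism $Y \to S$ along $f \: X \to S$ and hence is pristine by (a), and conclude by closure under composition (b) that $X \times_S Y \to S$ is pristine. There is little genuine difficulty here; the only points demanding care are the bookkeeping identifications—recognizing $\mathrm{pr}_X$ as a base change of $Y/S$, and $F^e_f \times_X Y$ as a base change of $F^e_f$—and observing that in the second half of (b) one correctly concludes only \emph{pre}-pristinity of $h$, consistent with the statement, since $h$ need not be flat even when $X/S$ and $Y/S$ both are.
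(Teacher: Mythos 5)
Your proposal is correct and follows essentially the same route as the paper's proof: (a) from stability of relative Frobenius under base change (\autoref{rem.relFrobstableunderbasechange}), (b) from the naturality formula \autoref{eqn.NaturalityFrob} read in both directions, and (c) by factoring $X \times_S Y \to X \to S$ and combining (a) and (b). Your write-up simply spells out the details (including the correct observation that the second half of (b) yields only pre-pristinity) that the paper's one-line proof leaves implicit.
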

\begin{proof}
(a) follows since flatness and relative Frobenius are stable under base change (see \autoref{rem.relFrobstableunderbasechange}). The naturality of relative Frobenius expressed in \autoref{eqn.NaturalityFrob} implies (b). Finally, (c) is a direct consequence of (a) and (b). 
\end{proof}

\begin{proposition}
\label{pro.pristinezariskilocal}
Being (pre)-pristine is (Zariski-)local on both the source and the target (see \cite[\href{https://stacks.math.columbia.edu/tag/036G}{Tag 036G}]{stacks-project} and \cite[\href{https://stacks.math.columbia.edu/tag/02KO}{Tag 02KO}]{stacks-project}). That is, $f\: X \to S$ is (pre-)pristine if and only if for every open subschemes $U \subset X, V \subset S$ with $V \subset f(U)$ the restriction $f\: U \to V$ is (pre-)pristine.
\end{proposition}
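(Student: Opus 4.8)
The plan is to reduce everything to three standard facts: flatness is Zariski-local on source and target, being an isomorphism is Zariski-local on the target, and the formation of the relative Frobenius is compatible with restriction to opens on both source and target. Fix $e\geq 1$, write $q=p^e$, and recall $X^{(q)}=X\times_{S,F^e_S}S$ with its two projections $G^e_f\:X^{(q)}\to X$ and $f^{(q)}\:X^{(q)}\to S$. On underlying spaces one has $F^e_f(x)=(x,f(x))$, since $G^e_f\circ F^e_f=F^e_X$ is the identity on points and $f^{(q)}\circ F^e_f=f$. Throughout I use the containment $f(U)\subset V$, which is what makes the restriction $f|_U\:U\to V$ well defined.

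\textbf{Forward direction.} Given opens $U\subset X$ and $V\subset S$ with $f(U)\subset V$, I would factor the restriction as
\[
f|_U\: U \hookrightarrow f^{-1}(V) \xrightarrow{\,f_V\,} V,
\]
where $f_V\:f^{-1}(V)\to V$ is the base change of $f$ along the open immersion $V\hookrightarrow S$. The open immersion $U\hookrightarrow f^{-1}(V)$ is \'etale, hence pristine and in particular pre-pristine; the map $f_V$ is (pre-)pristine by stability under base change, \autoref{prepristineproperties}(a); and (pre-)pristinity is closed under composition by \autoref{prepristineproperties}(b). Hence $f|_U$ is (pre-)pristine.

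\textbf{Backward direction.} The crux is to identify the relative Frobenius of a restriction with a restriction of $F^e_f$. For opens $U\subset X$, $V\subset S$ with $f(U)\subset V$, base change along $V\hookrightarrow S$ (\autoref{rem.relFrobstableunderbasechange}) realizes $F^e_{f_V}$ as the restriction of $F^e_f$ over the open $(f^{(q)})^{-1}(V)\subset X^{(q)}$, and the naturality formula \eqref{eqn.NaturalityFrob} applied to $U\hookrightarrow f^{-1}(V)$ then realizes $F^e_{f|_U}$ as the further restriction over $(G^e_f)^{-1}(U)$. Altogether, $F^e_{f|_U}$ is the restriction of $F^e_f$ to the open
\[
U^{(q)}\coloneqq (G^e_f)^{-1}(U)\cap(f^{(q)})^{-1}(V)=\{(x,s)\in X^{(q)}:x\in U,\ s\in V\},
\]
whose preimage under $F^e_f$ is exactly $U$ (using $F^e_f(x)=(x,f(x))$ and $U\subset f^{-1}(V)$). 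These opens cover $X^{(q)}$: given $(x_0,s_0)\in X^{(q)}$, so $f(x_0)=s_0$ on points, choose any open $V\ni s_0$ in $S$ and any open $U\ni x_0$ with $U\subset f^{-1}(V)$; then $(x_0,s_0)\in U^{(q)}$. Assuming every restriction $f|_U\:U\to V$ is pre-pristine, each restriction of $F^e_f$ to $U^{(q)}$ is an isomorphism, so $F^e_f$ is an isomorphism because this property is local on the target $X^{(q)}$; thus $f$ is pre-pristine. In the pristine case, each $f|_U$ is moreover flat, so $f$ is flat since flatness is local on source and target, and hence $f$ is pristine.

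I expect the main obstacle to be the identification in the backward direction: one must verify that forming $X^{(q)}$ commutes with restriction to opens on both the source and the target, and that the two restrictions—over $V$ on the target of $f$ and over $U$ on its source—assemble to a single open $U^{(q)}\subset X^{(q)}$ whose preimage under $F^e_f$ is $U$. This is precisely what the base-change stability (\autoref{rem.relFrobstableunderbasechange}) and the naturality \eqref{eqn.NaturalityFrob} of the relative Frobenius supply; once this is in hand, the remaining steps are routine applications of the locality of flatness and of being an isomorphism.
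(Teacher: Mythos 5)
Your proposal is correct and follows essentially the same route as the paper's: both rest on the compatibility of relative Frobenius with restriction to opens on the source and the target (so that $F^e_{f|_U}$ is identified with a restriction of $F^e_f$), combined with the Zariski-locality of flatness and of being an isomorphism and the stability properties of \autoref{prepristineproperties}. If anything, your converse direction—exhibiting the opens $(G^e_f)^{-1}(U)\subset X^{(q)}$ as a cover whose $F^e_f$-preimages are the opens $U$, and gluing isomorphisms over it—spells out in detail what the paper compresses into a one-line gluing remark.
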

\begin{proof}
Flatness is Zariski-local on both the source and the target. Let $f\: X \to S$ be pre-pristine. Let $U \subset S$ be any open set and $f_U\: f^{-1}(U) \to U$ be the pullback of $f$ to $U$. By \cite[\href{https://stacks.math.columbia.edu/tag/01JR}{Tag 01JR}]{stacks-project}, $F_{f_U}$ is the restriction of $F_f$ and thus is also an isomorphism. The converse follows since $F_f^{-1}$ is obtained by gluing the $F_{f_U}^{-1}$ for any open covering. Hence, pre-pristinity is local on the target. Pre-pristinity is also local on the source. If $U \subset X$ is any open set, then the relative Frobenius $U \to U^{(q)}$ is the base change of $F_f$ by the open immersion $U \to X$ and thus pre-pristine by (a) in \autoref{prepristineproperties}
\end{proof}

\begin{remark}
The above propositions can be reinterpreted by stating that fpqc pristine morphisms can be used as a class of coverings to construct both small and big sites. 
\end{remark}

As an immediate consequence of \autoref{pro.pristinezariskilocal}, we obtain the following.

\begin{corollary}
\label{le.pristinelocalonsource}
Let $f \: X \to S$ be a morphism of schemes. The following statements are equivalent.
\begin{enumerate}[label=(\alph*)]
\item{The morphism $f$ is (pre-)pristine.}
\item{For every affine open $U \subset X$, $V \subset S$ with $V \subset f(U)$ the homomorphism $\sO_S(V) \to \sO_X(U)$ is (pre-)pristine.}
\item{There exists an open covering $S=\bigcup_{j \in J} V_j$ and open coverings $f^{-1}(V_j)= \bigcup_{i\in I_j } U_{ij}$ such that the restriction $f|_{U_{ij}}\: U_{ij} \to V_j$ is (pre-)pristine for all $i \in I_j$ and all $j \in J$.}
\item{There exists an affine open covering $S=\bigcup_{j \in J} V_j$ and affine open coverings $f^{-1}(V_j)= \bigcup_{i\in I_j } U_{ij}$ such that $ \sO_S(V_j) \to \sO_X(U_{ij})$ is (pre-)pristine for all $i \in I_j$ and all $j \in J$.}
\end{enumerate}
Moreover, if $f$ is (pre-)pristine then for any open subschemes $U \subset X$, $V \subset S$ with $V \subset f(U)$ the restriction $f|_U\: U \to V$ is (pre-)pristine.
\end{corollary}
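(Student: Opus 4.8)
The plan is to deduce \autoref{le.pristinelocalonsource} as a formal consequence of \autoref{pro.pristinezariskilocal}, which already establishes that being (pre-)pristine is Zariski-local on both source and target. The strategy is the standard ``cycle of implications'' argument: I would prove $(a)\Rightarrow(b)\Rightarrow(d)\Rightarrow(c)\Rightarrow(a)$ (or a convenient reordering), since each individual arrow is either a trivial restriction of hypotheses or a direct invocation of the locality proposition. The final ``moreover'' clause is simply the content of \autoref{pro.pristinezariskilocal} reread for a single pair of opens, so I would dispatch it first or fold it into the $(a)\Rightarrow(b)$ step.

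\begin{proof}
The final ``moreover'' assertion is exactly \autoref{pro.pristinezariskilocal} applied to the single pair $U, V$, so it holds; in particular $(a)$ implies that every restriction $f|_U\: U \to V$ with $V \subset f(U)$ is (pre-)pristine. We now prove the equivalence of (a)--(d).

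First, $(a) \Rightarrow (b)$: given affine opens $U \subset X$, $V \subset S$ with $V \subset f(U)$, the restriction $f|_U\: U \to V$ is (pre-)pristine by the ``moreover'' clause, and this restriction is precisely $\Spec$ of the ring map $\sO_S(V) \to \sO_X(U)$, which is therefore (pre-)pristine.

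The implication $(b) \Rightarrow (d)$ is immediate: any affine open coverings $S = \bigcup_{j} V_j$ and $f^{-1}(V_j) = \bigcup_{i} U_{ij}$ satisfy $V_j \subset f(U_{ij})$, so the rings maps $\sO_S(V_j) \to \sO_X(U_{ij})$ are (pre-)pristine by (b). Likewise $(d) \Rightarrow (c)$ holds because $\sO_S(V_j) \to \sO_X(U_{ij})$ being (pre-)pristine means exactly that the affine restriction $f|_{U_{ij}}\: U_{ij} \to V_j$ is (pre-)pristine, and affine open coverings are in particular open coverings of the required form.

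Finally, $(c) \Rightarrow (a)$: by hypothesis there are open coverings $S = \bigcup_j V_j$ and $f^{-1}(V_j) = \bigcup_i U_{ij}$ with each $f|_{U_{ij}}\: U_{ij} \to V_j$ (pre-)pristine. Since being (pre-)pristine is local on the target by \autoref{pro.pristinezariskilocal}, it suffices to check (pre-)pristinity of each $f_{V_j}\: f^{-1}(V_j) \to V_j$; and since it is also local on the source, (pre-)pristinity of $f_{V_j}$ follows from that of its restrictions $f|_{U_{ij}}$ to the open cover $\{U_{ij}\}_i$ of $f^{-1}(V_j)$. Hence $f$ is (pre-)pristine.
\end{proof}

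The only point requiring care is the direction $(c)\Rightarrow(a)$, where one must apply the locality statement twice---once on the target to reduce to each $V_j$, and once on the source to glue over the $U_{ij}$---and confirm that \autoref{pro.pristinezariskilocal} is genuinely phrased so that both reductions are licensed. Everything else is bookkeeping translating between the scheme-theoretic restrictions $f|_U\: U \to V$ and the corresponding ring homomorphisms $\sO_S(V) \to \sO_X(U)$.
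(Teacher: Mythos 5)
Your proposal is correct and takes essentially the same approach as the paper: the paper gives no written proof, presenting the corollary as an immediate consequence of \autoref{pro.pristinezariskilocal}, and your cycle of implications plus the affine-scheme/ring-homomorphism dictionary is precisely the routine verification left to the reader, with the two-fold use of locality (on the target, then on the source) in $(c)\Rightarrow(a)$ licensed by that proposition's headline claim. One caveat: the condition ``$V \subset f(U)$'' must be read as ``$f(U) \subset V$'' throughout---a typo your proof inherits from the paper (e.g.\ in your claim that arbitrary affine coverings satisfy $V_j \subset f(U_{ij})$)---since otherwise neither the restriction $f|_U \colon U \to V$ nor the homomorphism $\sO_S(V) \to \sO_X(U)$ is even defined.
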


\begin{proposition}[fpqc descent for pristinity] \label{FlatDescentForPristinity}
Consider a cartesian square
\[
\xymatrix{
X' \ar[r]^-{h} \ar[d]_-{f'} & X \ar[d]^-{f} \\
S' \ar[r]^-{g} & S
}
\]
If $g$ is faithfully flat and quasi-compact, then $f$ is (pre-)pristine if and only if so is $f'$. 
\end{proposition}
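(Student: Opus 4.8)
The plan is to separate the two ingredients of pristinity, since \emph{pristine} means \emph{flat} and \emph{pre-pristine}. One direction costs nothing: flatness is preserved by base change, and by \autoref{prepristineproperties}(a) so is pre-pristinity, so if $f$ is (pre-)pristine then so is $f'=f\times_S S'$. All the content therefore lies in descending each of the two properties along the faithfully flat quasi-compact morphism $g$.

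For the flat part I would simply invoke fpqc descent of flatness: since $g$ is faithfully flat and quasi-compact and $f'$ is flat, $f$ is flat as well (this is the same descent argument used in the proof of \autoref{pro.FrobBEingFlat}; \cf \cite[Lemma 2.3]{DumitrescuRegularityFiniteFlatDimension}). This disposes of flatness and reduces the statement to the pre-pristine case.

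For pre-pristinity, fix $e\geq 1$. The key input, recorded in \autoref{rem.relFrobstableunderbasechange}, is that $F^e_{f'}$ is the base change of $F^e_f$ along $g$. Making this explicit, the formation of $X^{(q)}$ commutes with base change, so $X'=X\times_S S'$ and $X'^{(q)}=X^{(q)}\times_S S'$, and the square
\[
\xymatrix{
X' \ar[r]^-{F^e_{f'}} \ar[d] & X'^{(q)} \ar[d]^-{\pi} \\
X \ar[r]^-{F^e_f} & X^{(q)}
}
\]
is cartesian, with vertical arrows the canonical projections; indeed $X\times_{X^{(q)}}(X^{(q)}\times_S S')\cong X\times_S S'=X'$. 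The essential point is that $\pi$ is the base change of $g$ along $X^{(q)}\to S$, hence again faithfully flat and quasi-compact, that is, an fpqc covering of $X^{(q)}$.

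With this square in place, the remaining implication is pure descent. Since $f$ (resp. $f'$) is pre-pristine exactly when $F^e_f$ (resp. $F^e_{f'}$) is an isomorphism, I would descend the property of being an isomorphism along the fpqc covering $\pi$: as $F^e_{f'}=F^e_f\times_{X^{(q)}}X'^{(q)}$ is an isomorphism, so is $F^e_f$ (this is precisely the descent of isomorphisms already used in the proof of \autoref{pro.FrobBeingAnIso}). This gives the missing direction and finishes the argument. I do not anticipate a serious obstacle; the only step requiring genuine care is the correct identification of the base-change square above---in particular recognizing that the relevant descent is carried out along the projection $\pi$ over the base $X^{(q)}$, rather than along $g$ over $S$---after which both halves follow from standard fpqc descent.
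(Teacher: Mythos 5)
Your proposal is correct and follows essentially the same route as the paper: both reduce the statement to fpqc descent of flatness and of isomorphisms, applied via the fact that the relative Frobenius commutes with base change (\autoref{rem.relFrobstableunderbasechange}). Your explicit identification of the cartesian square and of the projection $\pi \: X'^{(q)} \to X^{(q)}$ as the base change of $g$ is a careful spelling-out of what the paper's one-line citation of descent leaves implicit, but it is the same argument.
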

\begin{proof}
By \cite[Proposition 2.5.1, Proposition 2.7.1]{EGA_IV_II}, faithfully flat morphisms and isomorphisms satisfy faithfully flat descent.
Thus we only have to check that $F_{f'}$ is an isomorphism if and only if $F_f$ is one. As noted in \autoref{rem.relFrobstableunderbasechange}, $F_f \times \id_{S'} = F_{f'}$ so that the result follows.
\end{proof}

\begin{proposition}[fppf descent for pristinity]
Consider a cartesian square
\[
\xymatrix{
X' \ar[r]^-{h} \ar[d]_-{f'} & X \ar[d]^-{f} \\
S' \ar[r]^-{g} & S
}
\]
If $g$ is faithfully flat and locally of finite presentation, then $f$ is (pre-)pristine if and only if so is $f'$. 
\end{proposition}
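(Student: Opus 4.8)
The plan is to reduce the fppf case to the fpqc case (\autoref{FlatDescentForPristinity}) that has already been established. The only gap between the two statements is the hypothesis on $g$: in \autoref{FlatDescentForPristinity} we assumed $g$ is faithfully flat and quasi-compact, whereas here $g$ is faithfully flat and locally of finite presentation. So the substance of the argument is to show that the previous proof applies, or can be made to apply, under the fppf hypotheses. I would first observe that being (pre-)pristine is Zariski-local on the target by \autoref{pro.pristinezariskilocal}. This lets me replace $S$ by an affine open $V$ and, correspondingly, $S'$ by $g^{-1}(V)$; the question of whether $f$ is pristine can then be checked $V$ by $V$.

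Next I would exploit that a morphism locally of finite presentation is, in particular, locally quasi-compact, and that fppf covers can be refined. Concretely, over an affine $V \subset S$ the morphism $g$ is locally of finite presentation, hence in particular flat and locally quasi-compact; one can then choose a quasi-compact open subscheme $W \subset g^{-1}(V)$ still surjecting onto $V$ (since $V$ is quasi-compact and $g$ is open, being flat and locally of finite presentation, we may cover $V$ by finitely many images of affine opens of $S'$ and take $W$ to be their union). The restriction $g|_W \: W \to V$ is then faithfully flat and quasi-compact. Writing $X'_W \coloneqq X' \times_{S'} W = X \times_S W$ with its projection $f'_W \: X'_W \to W$, the square defining $X'_W$ over $X$ and $V$ is cartesian with faithfully flat quasi-compact base map $g|_W$, so \autoref{FlatDescentForPristinity} applies verbatim and gives that $f|_V$ is (pre-)pristine if and only if $f'_W$ is. Finally, since $W \subset g^{-1}(V)$ is an open subscheme and $f'$ restricts to $f'_W$ over $W$, the ``moreover'' clause of \autoref{le.pristinelocalonsource} (together with \autoref{pro.pristinezariskilocal}, pre-pristinity being local on the source) shows $f'_W$ is (pre-)pristine whenever $f'$ is, and conversely pre-pristinity of $f'_W$ over the cover propagates; I would record that $f'$ being (pre-)pristine is equivalent to $f'_W$ being (pre-)pristine for such a $W$ by locality on the source and target.

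Putting these together: $f$ is (pre-)pristine $\iff$ $f|_V$ is (pre-)pristine for all affine $V$ $\iff$ $f'_W$ is (pre-)pristine $\iff$ $f'$ is (pre-)pristine. The main obstacle I anticipate is the bookkeeping in the reduction step, namely verifying that one really can extract a \emph{quasi-compact} faithfully flat piece $W$ of $g^{-1}(V)$ over a quasi-compact $V$ without losing surjectivity; this uses that flat morphisms locally of finite presentation are open (\cite[\href{https://stacks.math.columbia.edu/tag/01UA}{Tag 01UA}]{stacks-project}), so that the images of the affine opens of $S'$ form an open cover of the quasi-compact $V$ admitting a finite subcover. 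Everything else is a formal invocation of the Zariski-locality results and the already-proven fpqc descent statement, so I do not expect any genuinely new difficulty beyond this compactness argument.
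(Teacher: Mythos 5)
Your core idea is exactly the one the paper's (very terse) proof gestures at: the paper just says the argument of \autoref{FlatDescentForPristinity} carries over, with a pointer to Vistoli, and the mathematical content behind that pointer is precisely the refinement fact you prove by hand --- a faithfully flat morphism locally of finite presentation is open (Stacks Tag 01UA), so over each affine open $V \subset S$ one can choose a \emph{quasi-compact} open $W \subset g^{-1}(V)$ still surjecting onto $V$, and $g|_W \: W \to V$ is then faithfully flat and quasi-compact. Your reduction of the fppf statement to the already-proven fpqc statement via this refinement, combined with Zariski locality (\autoref{pro.pristinezariskilocal}), is sound in its substantive (descent) direction and is essentially the same route as the paper's; the paper's alternative reading, rerunning the fpqc proof with fppf descent of flatness and isomorphisms, differs only in bookkeeping.

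One step, however, is misjustified. You assert that ``$f'$ being (pre-)pristine is equivalent to $f'_W$ being (pre-)pristine for such a $W$ by locality on the source and target.'' The implication $f'$ pristine $\Rightarrow$ $f'_W$ pristine is indeed the restriction (``moreover'') clause of \autoref{le.pristinelocalonsource}, but the converse cannot follow from locality: the opens $W = W(V)$ were chosen only so that their images cover $V$, so as $V$ ranges over an affine cover of $S$ the various $W(V)$ need not cover $S'$ (points of $g^{-1}(V) \setminus W(V)$ are never seen), and locality arguments require an actual cover. As written, this breaks your chain of equivalences in the direction ``$f$ pristine $\Rightarrow f'$ pristine.'' The repair is immediate: that direction is not a descent statement at all but base change stability, \autoref{prepristineproperties} (a). So reorganize the proof as follows: $(\Rightarrow)$ is \autoref{prepristineproperties} (a); $(\Leftarrow)$ is your refinement argument, namely $f'$ pristine $\Rightarrow$ $f'_{W(V)}$ pristine (restriction) $\Rightarrow$ $f|_V$ pristine (\autoref{FlatDescentForPristinity} applied to the cartesian square over $g|_{W(V)}$) $\Rightarrow$ $f$ pristine (locality on the target). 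With that rearrangement the proof is complete.
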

\begin{proof}
This follows along the same lines as \autoref{FlatDescentForPristinity} (\cf \cite[Proposition 1.15]{VistoliGrothendieckTopologies}).
\end{proof}

The following is a generalization of \autoref {prepristineproperties} (c). For this, we will introduce the following setup. Let $\mathsf{C}/S$ be a full subcategory of $\mathsf{Sch}/S$---the category of $S$-schemes. We will denote the full subcategory of pristine $S$-schemes in $\mathsf{C}/S$ by $\mathsf{Pri^C}/S$.
\begin{proposition} \label{PristinityAndLimits}
Let $I$ be a small filtered category and consider a diagram $D \: I \to \mathsf{Pri^C}/S$. If the limit $\lim_{i \in I}{D(i)}$ of this diagram exists in $\mathsf{C}/S$, then it exists in $\mathsf{Pri^C}/S$. Thus, the canonical morphisms $\lim_{i \in I}{D(i)} \to D(i)$ are pristine. In particular, ind-\'etale homomorphisms of rings are pristine.
\end{proposition}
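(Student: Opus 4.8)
The plan is to reduce the statement to two claims: that the limit object $X \coloneqq \lim_{i\in I} D(i)$ is pristine over $S$ (so that it lands in $\mathsf{Pri^C}/S$), and that the canonical projections $\pi_i \: X \to D(i)$ are pristine morphisms. Since pristinity means flatness together with pre-pristinity, I would treat the two halves separately: the pre-pristine part is completely formal and uses nothing about $I$ being filtered, whereas the flatness part is exactly where the filtered hypothesis enters. The final assertion about ind-\'etale maps then follows by taking $D$ to be the system of \'etale $R$-algebras presenting the colimit, since \'etale morphisms are pristine.

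First I would establish pre-pristinity of $X$. The formation of the relative Frobenius is natural (\autoref{rem.relFrobstableunderbasechange}), so $F^e_{(-)/S}$ is a natural transformation from the identity functor on $\mathsf{Sch}/S$ to the base-change functor $(-)^{(q)} = (-)\times_{S,F^e_S} S$. The latter is a right adjoint (its left adjoint is post-composition with $F^e_S$) and hence commutes with limits, giving $X^{(q)} = \lim_{i} D(i)^{(q)}$. Restricting the natural transformation to the diagram $D$ yields, by the universal property of the limit, the identification $F^e_{X/S} = \lim_i F^e_{D(i)/S}$. Each $F^e_{D(i)/S}$ is an isomorphism because $D(i)$ is pristine, and a limit of a natural isomorphism is an isomorphism (its inverse is the limit of the pointwise inverses). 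Therefore $F^e_{X/S}$ is an isomorphism and $X$ is pre-pristine over $S$.

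Next I would prove flatness. This is local on both $S$ and $X$, so I would reduce to the affine situation, where $X$ corresponds to a \emph{filtered} colimit $B = \colim_i A_i$ of the flat $R$-algebras $A_i$; since a filtered colimit of flat modules is flat, $X \to S$ is flat. Together with the previous paragraph this shows $X$ is pristine over $S$. For the projections, \autoref{prepristineproperties} guarantees that any $S$-morphism between pre-pristine $S$-schemes is pre-pristine, so each $\pi_i$ is automatically pre-pristine; it remains to see that $\pi_i$ is flat. Writing $\pi_i$ on rings as $A_i \to B = \colim_{j} A_j$, this is a filtered colimit of the transition maps $A_i \to A_j$, so $\pi_i$ is flat as soon as those transition maps are flat. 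In the ind-\'etale application this holds automatically: a morphism between schemes \'etale over $S$ is itself \'etale, hence flat, so every $A_i \to A_j$ is \'etale and the projections are pristine.

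The main obstacle I anticipate is the flatness bookkeeping rather than the Frobenius computation. The identification $F^e_{X/S} = \lim_i F^e_{D(i)/S}$ is robust once one checks that $(-)^{(q)}$ preserves the particular limit at hand (equivalently, that the limit computed in $\mathsf{C}/S$ agrees with the one in $\mathsf{Sch}/S$). By contrast, flatness of the canonical projections is genuinely delicate: pre-pristinity alone does not force flatness, and for an arbitrary pristine diagram the transition maps need not be flat, so flatness of $\pi_i$ really does rely on the flatness of the transition morphisms. This is harmless for the ind-\'etale statement, where \'etale cancellation supplies the needed flatness, but it is the step that must be handled with care in the general formulation.
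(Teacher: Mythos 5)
Your argument for the heart of the statement---pristinity of the limit object over $S$---is the paper's proof almost verbatim: one identifies $F^e_{X/S}$ with $\lim_i F^e_{D(i)/S}$ (the paper phrases this as ``limits commute with limits,'' since $(-)^{(q)}$ is itself a fiber product), and then obtains flatness by reducing to affine $S$ and using that filtered colimits of flat modules are flat. Where you genuinely diverge is on the clause about the canonical projections $\pi_i \: \lim_j D(j) \to D(i)$, and there your extra caution is warranted. The paper's proof is silent on this clause: implicitly the projections are pre-pristine by \autoref{prepristineproperties}(b), but nothing in the paper supplies their flatness. You instead make flatness of $\pi_i$ conditional on flatness of the transition morphisms, and that condition cannot be dropped. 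Indeed, take the two-object filtered category $0 \to 1$ and the system $\bF_p\bigl[t^{1/p^\infty}\bigr] \to \bF_p$; both algebras are perfect, hence pristine over $\bF_p$, the colimit is $\bF_p$, and the projection to the first term is exactly the quotient map of \autoref{ex.PrePristineNotPristine}, which is pre-pristine but not flat, hence not pristine. So, as literally written, the ``Thus'' clause of \autoref{PristinityAndLimits} fails in general, and your conditional formulation is the correct one. None of this affects what the paper actually uses later: the ind-\'etale application needs only pristinity of the limit over $S$, which both you and the paper establish, and in that case the transition maps are \'etale, hence flat, so even the projection claim holds there, exactly as you observe.
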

\begin{proof}
This is a formal consequence of the abstract nonsense ``limits commute with limits,'' for fiber products are limits. More precisely, the relative Frobenius of $ \lambda \: \lim_{i \in I}{D(i)} \to S$ is the limit of the relative Frobenius maps $F^e_{D(i)/S} : D(i) \to D(i)^{(q)}$ as the following commutative diagram makes precise
\[
\xymatrixcolsep{3.5pc}\xymatrix{
\lim_{i \in I}{D(i)} \ar[r]^-{F^e_{\lambda}} \ar[dr]_-{\lim_{i \in I}{F^e_{D(i)/S}}} & \bigl(\lim_{i \in I}{D(i)} \bigr)^{(q)}\\
& \lim_{i \in I}{D(i)^{(q)}} \ar[u]^-{\cong}_-{\textrm{can}}
}
\]
Then, if every $F^e_{D(i)/S}$ is an isomorphism then so is $F^e_{\lambda}$. It remains to explain why $\lim_{i \in I}{D(i)}$ is flat over $S$. We presume this is well-known yet we prove it here for the sake of completeness and by the lack of a suitable reference. First of all, notice that limits are stable under base change---``limits commute with limits''---then we may assume the base $S$ is affine, say $S = \Spec R$. Then, flatness follows from the exactness of colimits over filtered categories in $R$\textsf{-mod} \cite[\href{https://stacks.math.columbia.edu/tag/00DB}{Tag 00DB} and \href{https://stacks.math.columbia.edu/tag/04B0}{Tag 04B0}]{stacks-project}. Indeed, this implies that colimits over filtered categories of flat $R$-modules are flat.
\end{proof}

\begin{example}[Henzelizations]
    The (strict-)henselization $R \to R^{\mathrm{(s)h}}$ of a local ring $(R,\fram,\kay)$ is ind-\'etale and thus pristine. 
\end{example}

\usetikzlibrary{decorations.markings}
\tikzset{degil/.style={
            decoration={markings,
            mark= at position 0.5 with {
                  \node[transform shape] (tempnode) {$\backslash$};
                  }
              },
              postaction={decorate}
}
}
Ind-étale homomorphisms belong to the larger class of \emph{weakly \'etale} homomorphisms. Recall that a morphism $f\colon X \to S$ is weakly \'etale if both $f$ and the diagonal $X \to X \times_S X$ are flat. Equivalently, $f$ is weakly \'etale if and only if the local homomorphisms $f^{\#}_x \: \sO_{S,f(x)} \to \sO_{X,x}$ are isomorphisms on strict henselizations; see \cite[\href{https://stacks.math.columbia.edu/tag/094Z}{Tag 094Z}]{stacks-project}. Additionally, a morphism $f$ is called \emph{$\bL$-trivial} if its cotangent complex $\bL_f$ has trivial homology.

Our next goal is to establish that for a morphism $f\colon X \to S$ of schemes, we have the following strict implications:
\[\begin{tikzcd}[arrows=Rightarrow]
\text{weakly \'etale} \arrow[out=30,in=150, "\text{\cite[\href{https://stacks.math.columbia.edu/tag/0F6W}{Tag 0F6W}]{stacks-project}}"]{r}{}& \text{pristine} \arrow[out=210,in=330, degil, "\ref{rem.notweaklyetalebutpristine}"]{l}{}  \arrow[out=30,in=150]{r}{}& \text{pre-pristine} \arrow[out=30,in=150, "\ref{pro.Pre-pristineHavetrivialCotangentComplex}"]{r}{} \arrow[out=210,in=330, degil, "\ref{ex.PrePristineNotPristine}"]{l}{}& \mathbb{L}\textnormal{-trivial} \arrow[out=30,in=150]{r}{} \arrow[out=210,in=330, degil, "\ref{ex.BhattGabberExample}"]{l}{}& \text{formally \'etale} \arrow[out=210,in=330, degil, pos=0.48, "\text{\cite[\href{https://stacks.math.columbia.edu/tag/06E5}{06E5}]{stacks-project}}"]{l}{} \arrow[out=220,in=310, bend left=50, "+ \text{locally noetherian \ref{fetaleprepristinefornoetherian}}"]{lll}{}
\end{tikzcd}\]

\begin{remark}[Pristinity does not imply weak étaleness]
\label{rem.notweaklyetalebutpristine}
We explain why pristinity does not imply being weakly \'etale. Let $(R,\fram,\kay)$ be an $F$-finite strictly henselian noetherian local ring. By \autoref{pro.completionpristineFfinite}, we know that its completion $R \to \hat{R}$ is pristine but for it to be weakly \'etale it must be an isomorphism (\cite[\href{https://stacks.math.columbia.edu/tag/094Z}{Tag 094Z}]{stacks-project}). For a concrete example, take $R$ to be the (strict-)henselization of $\bF_p[x]$ at $(x)$, which is the subring of $\bF_p\llbracket x \rrbracket$ of algebraic power series and so not complete.
\end{remark}

\begin{remark} \label{rem.LTrivialImpliesFormallyEtale} The fact that $\bL$-trivial maps are formally étale can be seen as follows. By \cite[\href{https://stacks.math.columbia.edu/tag/08UR}{Lemma 08UR}]{stacks-project}, $f$ is formally unramified. By \cite[\href{https://stacks.math.columbia.edu/tag/02HH}{Lemma 02HH}]{stacks-project}, it suffices to show that $f$ is formally smooth, for which we may assume that it is affine (\cite[\href{https://stacks.math.columbia.edu/tag/0D0F}{Lemma 0D0F}]{stacks-project}). The assertion now follows from \cite[\href{https://stacks.math.columbia.edu/tag/031J}{Proposition 031J}]{stacks-project}, \cite[\href{https://stacks.math.columbia.edu/tag/08RB}{Lemma 08RB}]{stacks-project}, and \cite[\href{https://stacks.math.columbia.edu/tag/08T3}{Lemma 08T3}]{stacks-project}.
Furthermore, if we assume that $f$ is finitely presented, then all these notions are equivalent to $f$ being 
\'etale (use \cite[\href{https://stacks.math.columbia.edu/tag/02HM}{Lemma 02HM}]{stacks-project} and \cite[\href{https://stacks.math.columbia.edu/tag/094X}{Lemma 094X}]{stacks-project}).
\end{remark}

\begin{remark}[$p$-infinitesimal deformations]
\label{PrepristineFormallyunramified}
    The above shows a path to see why pre-pristine morphisms are formally étale. However, this can be checked directly and completely elementarily (without resorting to using the cotangent complex). For example, since $\Omega_f = \Omega_{F_f}$, we see that as soon as $F_f$ is a closed immersion, $f$ is formally unramified. Likewise, one verifies the infinitesimal lifting property for $f$ if $F_f$ is a closed immersion. In fact, let $\alpha \: R \to A$ be an $R$-algebra. Consider a commutative diagram of $R$-algebras
\[
\xymatrix{
R \ar[d]_-{\alpha} \ar[r]^-{\beta} & B \ar[d]^-{\pi} \\
A \ar[r]^-{\theta } & B/\frab
}
\]
such that $\frab^{[p]}=0$ (e.g. $\frab^2=0$). Let $\vartheta_0 \: A \to B$ be a homomorphism of $R$-algebras lifting $\theta: A \to B/\frab$. Then, $F_*\frab$ is canonically a $B/\frab$-module and so an $A$-module by restriction of scalars along $\theta$, which makes it into an $A_{F_*R}$-module. Moreover:
\begin{claim}
    We have the following commutative diagram
   \[
   \xymatrix@C=6em{
 \{\vartheta \in \Hom_{R\textnormal{-alg}}(A,B) \mid \theta = \pi \circ \vartheta \} \ar[r]^-{\xi\:\vartheta \mapsto F_*(\vartheta-\vartheta_0)} \ar[rd]_-{\subset} & \ker\big( \Hom_{A_{F_*R}}(F_*A,F_*\frab) \xrightarrow{\phi \mapsto  \phi(F_*1)} F_*\frab \big) \ar[d]^-{\zeta\:\phi \mapsto \theta_0 + \varphi} \\
   & \{\vartheta \in \Hom_{R}(A,B) \mid \theta = \pi \circ \vartheta, \vartheta(1)=1\}
   }
   \]
    where $\varphi \: A \to B$ is defined by the identity $F_* \varphi(a) = \phi (F_* a)$ for all $a \in A$. In particular, $\xi$ is injective.
\end{claim}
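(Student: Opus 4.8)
The plan is to check that both maps $\xi$ and $\zeta$ land in the indicated targets, that the triangle commutes—i.e.\ that $\zeta\circ\xi$ equals the displayed inclusion—and then to read off injectivity of $\xi$ formally: since $\zeta\circ\xi$ will equal the (manifestly injective) forgetful inclusion of algebra liftings into module liftings, and a map whose composite with something is injective is itself injective, $\xi$ must be injective.

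Write $d \coloneqq \vartheta-\vartheta_0$ for an $R$-algebra homomorphism $\vartheta$ lifting $\theta$. First I would record the structural identities satisfied by $d$. Because $\pi\circ\vartheta=\theta=\pi\circ\vartheta_0$, the map $d$ takes values in $\frab=\ker\pi$; because $\vartheta$ and $\vartheta_0$ agree on the image of $\alpha$, one has $d(\alpha(r))=0$ for all $r\in R$; and from additivity together with the Leibniz-type rule $d(xy)=d(x)\vartheta(y)+\vartheta_0(x)d(y)$, expanding $\vartheta(a)^p=(\vartheta_0(a)+d(a))^p$ by the characteristic-$p$ binomial theorem and using $d(a)^p\in\frab^{[p]}=0$ gives $d(a^p)=0$. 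This last identity is the crucial one, and is precisely where the hypothesis $\frab^{[p]}=0$ is used.

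With these in hand, the well-definedness of $\xi$ reduces to showing that $F_*d\colon F_*A\to F_*\frab$ is $A_{F_*R}$-linear and annihilates $F_*1$; the latter is immediate from $d(1)=0$. For linearity I would evaluate both sides on a generator $a\otimes F_*r$ acting on $F_*a'$: the module structure of $F_*A$ sends this to $F_*(a^p\alpha(r)a')$, while the structure of $F_*\frab$ sends $F_*d(F_*a')$ to $F_*\!\big(\vartheta_0(a)^p\beta(r)\,d(a')\big)$, so equality becomes the identity $d(a^p\alpha(r)a')=\vartheta_0(a)^p\beta(r)\,d(a')$, which follows by two applications of the Leibniz rule together with $d(a^p)=0$ and $d(\alpha(r))=0$. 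Dually, for $\zeta$, given $\phi$ in the kernel I would set $\varphi(a)\coloneqq\phi(F_*a)$ (read inside $\frab$) and verify that $\vartheta_0+\varphi$ is $R$-linear, lifts $\theta$, and is unital: $R$-linearity follows from $A_{F_*R}$-linearity of $\phi$ applied to $(1\otimes F_*r)\cdot F_*a=F_*(\alpha(r)a)$, the lifting property from $\varphi(a)\in\frab$, and unitality from $\varphi(1)=\phi(F_*1)=0$.

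Finally I would check commutativity directly: feeding $\xi(\vartheta)=F_*d$ into $\zeta$ recovers $\varphi=d$, whence $\zeta(\xi(\vartheta))=\vartheta_0+d=\vartheta$, which is exactly the image of $\vartheta$ under the inclusion. Thus $\zeta\circ\xi$ is the inclusion and $\xi$ is injective. I expect the only genuine work to be the $A_{F_*R}$-linearity of $F_*d$; the remaining verifications are routine bookkeeping of the two module structures, and the single characteristic-$p$ input throughout is the vanishing $d(a^p)=0$ forced by $\frab^{[p]}=0$.
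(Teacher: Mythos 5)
Your proof is correct and follows essentially the same route as the paper: the same telescoping (Leibniz-type) decomposition of $d=\vartheta-\vartheta_0$ and the same characteristic-$p$ input, namely that $d(a)^p\in\frab^{[p]}=0$ forces $d(a^p)=0$, which yields the $A_{F_*R}$-linearity of $F_*d$. You additionally spell out the routine verifications (well-definedness of $\zeta$, commutativity of the triangle, and the formal deduction of injectivity) that the paper leaves implicit, which is fine but not a different argument.
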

\begin{proof}[Proof of claim]
    notice that $\phi \coloneqq \vartheta - \vartheta_0$ indeed maps $A$ into $\frab \coloneqq \ker \pi$ and $\phi(F_*1)=0$. Moreover, this difference is $A^p \otimes_{R^p} R$-linear. The fact that it is $R$-linear is obvious, and for all $a,a'\in A$ we have
    \begin{align*}
     \phi(a^p a') &= \vartheta(a)^p \vartheta(a') - \vartheta_0(a)^p \vartheta_0(a')\\
     &=\vartheta(a)^p \vartheta(a') - \vartheta(a)^p \vartheta_0(a') + \vartheta(a)^p \vartheta_0(a') - \vartheta_0(a)^p \vartheta_0(a') \\
     &= \vartheta(a)^p \phi(a') + \phi(a)^p \vartheta_0(a') \\
     & = \vartheta(a)^p \phi(a')
    \end{align*}
    as $\phi(a)^p \in \frab^{[p]} = 0$. So $\xi$ is a well-defined function. 
\end{proof}
In particular, if $A_{F_*R} \to F_*A$ is surjective then the codomain of $\xi$ is zero and so its domain is a singleton set $\{\vartheta_{0}\}$, which is to say that $\alpha$ is b-nil formally unramified \cite{morrownotes}.
\end{remark}

\begin{example}[Formally \'etale does not imply pristine] \label{ex.formallyEtaleNotPristine}
Here is another interesting example in addition to \cite[\href{https://stacks.math.columbia.edu/tag/06E5}{06E5}]{stacks-project} mentioned in the diagram above. Let $p \neq 2$ and $R$ be the ring
\[
R \coloneqq \bF_p \bigl[t^{a/2^b} \bigm| a,b \in \bZ_{> 0} \bigr].
\]
Let $\theta \: R \to A$ be the quotient algebra given by the ideal $\mathfrak{a} \coloneqq \big(t^{a/2^b} \bigm| a,b \in \bZ_{>0}\bigr) \subset R$, in particular $A \cong \bF_p$. We claim that $\theta$ is formally \'etale but not pristine.
\begin{claim}
$\theta \: R \to A$ is formally \'etale.
\end{claim}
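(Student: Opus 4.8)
The plan is to check formal étaleness straight from the infinitesimal lifting property, taking advantage of the fact that the augmentation ideal $\fra$ is idempotent to an extreme degree. Since $\theta\colon R\to A$ is surjective we have $\Omega_{A/R}=0$, so $\theta$ is automatically formally unramified; equivalently, any lift in a test diagram is unique because an $R$-algebra map out of the quotient $A$ is determined by the structure map $R\to C$. Thus the only thing left to prove is the existence of lifts, i.e.\ formal smoothness.

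The key computation I would isolate first is that $\fra=\fra^n$ for every $n\ge 1$. Indeed, each generator satisfies $t^{a/2^b}=\bigl(t^{a/2^{b+k}}\bigr)^{2^k}$ for all $k\ge 0$, and $t^{a/2^{b+k}}\in\fra$ since $a,b+k\in\bZ_{>0}$. Choosing $k$ with $2^k\ge n$ exhibits every generator of $\fra$ as a product of at least $n$ elements of $\fra$, giving $\fra\subseteq\fra^n$; the reverse inclusion is trivial, so $\fra=\fra^n$. Note this uses nothing about $p$; the hypothesis $p\neq 2$ will only be needed later to ensure the example is genuinely not pristine.

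With this in hand the lifting is immediate. Given a surjection $\pi\colon C\twoheadrightarrow C/\frab$ of $R$-algebras with $\frab$ nilpotent, say $\frab^{n}=0$, together with an $R$-algebra homomorphism $u\colon A\to C/\frab$, let $\psi\colon R\to C$ be the structure map. Because $u$ is $R$-linear and $\theta$ kills $\fra$, we get $\pi\circ\psi(\fra)=u\circ\theta(\fra)=0$, so $\psi(\fra)\subseteq\frab$. Then $\psi(\fra)=\psi(\fra^{n})\subseteq\frab^{\,n}=0$, whence $\psi$ factors through $A=R/\fra$. This factorization is the desired $R$-algebra lift $\tilde u\colon A\to C$ (one checks $\pi\circ\tilde u=u$ using surjectivity of $\theta$), and it is the unique one by formal unramification. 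Therefore $\theta$ is formally étale.

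The whole argument is short, and the only real insight—the step I would flag as the crux rather than an obstacle—is recognizing the strong idempotence $\fra=\fra^n$ coming from the arbitrarily divisible dyadic exponents. Once that is in place, the nilpotence of $\frab$ does all the work, and no noetherian hypotheses enter; this is exactly what makes the example a candidate for separating formal étaleness from pristinity.
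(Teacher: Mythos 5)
Your proof is correct and is essentially the paper's own argument: both rest on the idempotence of the augmentation ideal to force the structure map $R \to C$ to kill $\fra$ (you prove $\fra = \fra^n$ for all $n$, while the paper uses $\fra = \fra^2$), after which the lift exists, and is unique because $\Omega_{A/R} = 0$. The only cosmetic difference is that you handle arbitrary nilpotent test ideals $\frab$ with $\frab^n = 0$ directly, whereas the paper checks the equivalent square-zero case $I^2 = 0$.
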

\begin{proof}
This follows from the equality $\mathfrak{a}^2 = \mathfrak{a}$. Indeed, let $\psi \: R \to B$ be some $R$-algebra containing an ideal $I \subset B$ such that $I^2=0$, we must prove that the function
\[
\Hom_{R\textnormal{-alg}}(A,B) \to \Hom_{R\textnormal{-alg}}(A,B/I), \quad \theta \mapsto \pi \circ \theta 
\]
is bijective, where $\pi \: B \to B/I$ is the canonical quotient homomorphism. Observe that this is a function between sets of cardinality $0$ or $1$. The injectivity follows since $\theta$ is clearly formally unramified as $\Omega_\theta = 0$. To see it is surjective, suppose there is an $R$-algebra homomorphism $A \to B/I$. This simply means $(\pi \circ \psi)(\mathfrak{a}) = 0$, \ie $\psi(\mathfrak{a}) \subset \psi(\mathfrak{a}) \cdot B \subset I$. However, by taking squares, this inclusion yields
\[
\psi(\mathfrak{a}) = \psi \big(\mathfrak{a}^2\big) \subset \big(\psi(\mathfrak{a}) \cdot B \big)^2 \subset I^2 = 0.
\]
In other words, there is a homomorphism of $R$-algebras $A \to B$.
\end{proof}

\begin{claim} \label{cla.NotFlatNorPrePristine}
$\theta \: R \to A$ is not flat nor pre-pristine.
\end{claim}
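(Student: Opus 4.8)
I would prove the two assertions independently.

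\emph{Non-flatness.} The plan is to exploit that $R$ is a domain. Concretely, $R$ is the monoid algebra $\bF_p[M]$ of the monoid $M=\bZ[1/2]_{\geq 0}$ of nonnegative dyadic rationals (the exponents $a/2^b$ with $a,b\geq 1$ range over all positive dyadic rationals); since $M$ is cancellative and torsion-free, $R$ is an integral domain. Now $A=R/\mathfrak{a}\cong\bF_p$ is a nonzero torsion $R$-module---for example $t\cdot 1=0$ in $A$ while $t\neq 0$ in $R$---and a flat module over a domain is torsion-free. Hence $A$ is not flat over $R$, so $\theta$ is not flat (and therefore not pristine). This alone already realizes the example's purpose, namely that formal étaleness does not force pristinity.

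\emph{Non-pre-pristinity: reduction.} For the second assertion I would unwind the relative Frobenius of $\theta$ directly. The target is $F_*A=F_*(R/\mathfrak{a})$, while base change computes the source as
\[
A\otimes_R F_*R=(R/\mathfrak{a})\otimes_R F_*R=F_*R/\mathfrak{a}F_*R=F_*\bigl(R/\mathfrak{a}^{[p]}\bigr),
\]
where $\mathfrak{a}^{[p]}=(g^{p}\mid g\in\mathfrak{a})$ is the Frobenius power and the last equality uses $a\cdot F_*s=F_*(a^{p}s)$, so that $\mathfrak{a}F_*R=F_*\mathfrak{a}^{[p]}$. Under these identifications $F_{\theta}$ becomes the $F_*$-twist of the canonical surjection $R/\mathfrak{a}^{[p]}\twoheadrightarrow R/\mathfrak{a}$. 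Thus $\theta$ is pre-pristine if and only if $\mathfrak{a}^{[p]}=\mathfrak{a}$, and the whole assertion reduces to deciding this equality of monomial ideals.

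\emph{The crux.} The main obstacle is precisely this comparison, and it is here that $p\neq 2$ must be used. Since $\mathfrak{a}^{[p]}$ is generated by the $p$-th powers of the generators of $\mathfrak{a}$, its exponent set is $p\cdot\bZ[1/2]_{>0}+\bZ[1/2]_{\geq 0}$, whereas that of $\mathfrak{a}$ is $\bZ[1/2]_{>0}$; the inclusion $\mathfrak{a}^{[p]}\subseteq\mathfrak{a}$ is automatic, so I would focus entirely on whether it is strict. The delicate feature---and the step I would scrutinize most---is that the $2$-divisibility of $M$ (which is exactly what forces $\mathfrak{a}^{2}=\mathfrak{a}$, the input to formal étaleness) makes both ideals contain monomials of arbitrarily small exponent, so no naive degree or valuation bound separates them. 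Settling whether multiplication by the odd prime $p$ nevertheless leaves a dyadic exponent in $\mathfrak{a}\setminus\mathfrak{a}^{[p]}$ is the heart of the matter, and I expect the coprimality of $p$ and $2$ to be the only available lever.
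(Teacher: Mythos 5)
Your first half is fine. That $R$ is the monoid algebra of the cancellative, torsion-free monoid $\bZ[1/2]_{\geq 0}$, hence a domain, and that $A\cong\bF_p$ has $t$-torsion, correctly rules out flatness. This is the paper's own argument in barely different clothing: the paper invokes failure of purity of $\mathfrak{a}$ (there is no $y\in\mathfrak{a}$ with $t=yt$), which over a domain is exactly your torsion observation.

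The second half is where the problem lies, and it is not merely that you stopped at the ``crux.'' Your reduction is correct and is precisely the paper's: $F_\theta$ is the $F_*$-twist of $R/\mathfrak{a}^{[p]}\to R/\mathfrak{a}$, so pre-pristinity is equivalent to $\mathfrak{a}^{[p]}=\mathfrak{a}$. But the question you leave open resolves in the \emph{opposite} direction: the equality holds for every prime $p$, including $p\neq 2$. Indeed, for any positive dyadic $d$ choose $m$ with $p/2^m<d$; then
\[
t^{d} \;=\; \bigl(t^{1/2^m}\bigr)^{p}\cdot t^{\,d-p/2^m}\;\in\;\mathfrak{a}^{[p]},
\]
since $(t^{1/2^m})^p$ is the $p$-th power of a generator of $\mathfrak{a}$ and $d-p/2^m\in\bZ[1/2]_{\geq 0}$. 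Thus every monomial of $\mathfrak{a}$ lies in $\mathfrak{a}^{[p]}$, so $\mathfrak{a}\subseteq\mathfrak{a}^{[p]}\subseteq\mathfrak{a}$ and $F_\theta$ is an isomorphism: $\theta$ \emph{is} pre-pristine. In your exponent-set language, $p\,\bZ[1/2]_{>0}+\bZ[1/2]_{\geq 0}=\bZ[1/2]_{>0}$, because $p\,\bZ[1/2]_{>0}$ already contains arbitrarily small positive elements; coprimality of $p$ and $2$ does make $p\,\bZ[1/2]$ a proper subgroup of $\bZ[1/2]$, but saturation into a monoid ideal destroys that distinction. Your own remark that ``no naive degree or valuation bound separates them'' is in fact the mechanism forcing equality, so the lever you hoped for does not exist. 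Consequently the second assertion of the Claim is false as stated, and the paper's own proof is erroneous at exactly this point: it asserts that $t,t^{1/2},t^{1/4},\ldots$ are linearly independent in $R/\mathfrak{a}^{[q]}$, whereas the computation above (with $q$ in place of $p$) shows every $t^{1/2^k}$ is zero there. What survives of the example is only what your first paragraph proves: $\theta$ is formally \'etale and not flat, hence not pristine. In particular, the example cannot serve the role advertised in the introduction, namely exhibiting a formally unramified map whose relative Frobenius fails to be surjective in the non-noetherian setting.
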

\begin{proof}
To see why $\theta$ is not flat, note that $\mathfrak{a}$ is not a pure ideal; \cite[\href{https://stacks.math.columbia.edu/tag/04PQ}{Tag 04PQ}]{stacks-project}. Indeed, observe that, for the element $t \in \mathfrak{a}$, it is impossible to find $y \in \mathfrak{a}$ such that $t=yt$.

Similarly, we see directly that the relative Frobenius of $\theta$ is not an isomorphism. In fact, for a closed immersion like $\theta \: R \to A = R/\mathfrak{a}$, the relative Frobenius is given by
\[
F^e_{\theta} \: F^e_* \big(R/\mathfrak{a}^{[q]}\big) \to F^e_* (R/\mathfrak{a}) = F^e_* \big( R/\mathfrak{a}^{[q]} \xrightarrow{\mathrm{can}} R/\mathfrak{a} \big).
\]
However, in our case, the canonical homomorphism $R/\mathfrak{a}^{[q]} \to R/\mathfrak{a}$ is not an isomorphism. Indeed, $R/\mathfrak{a} = A \cong \bF_p$, whereas $R/\mathfrak{a}^{[q]}$ is an infinite dimensional $A \cong \bF_p$-module (for instance, the elements $t,t^{1/2},t^{1/4},t^{1/8},\ldots$ form a linearly independent set).
\end{proof}
\end{example}

\begin{example} [Pre-pristinity does not imply pristinity] \label{ex.PrePristineNotPristine}
In this example, we prove that pre-pristine morphisms are not necessarily pristine. Indeed, it suffices to take $R \to A$ as in \autoref{ex.formallyEtaleNotPristine} with $p=2$. More generally, for an arbitrary prime $p$ let 
\[
R \coloneqq \bF_p \big[\bZ[1/p] \cap \bQ_{> 0}\big] = \bF_p \bigl[t^{a/p^b} \bigm| a,b \in \bZ_{> 0} \bigr]
\]
and let $\theta \: R \to A$ be the closed immersion cut out by the ideal $\mathfrak{a} \coloneqq \big(t^{a/p^b} \bigm| a,b \in \bZ_{>0}\bigr) \subset R$. One readily verifies that $\mathfrak{a}^{[p]} = \mathfrak{a}$, and therefore $R \to A$ is pre-pristine. However, $\mathfrak{a}$ is not a pure ideal (by the same reason as in the proof of \autoref{cla.NotFlatNorPrePristine}), thus $R \to A$ is not flat.
\end{example}

\begin{proposition} \label{pro.Pre-pristineHavetrivialCotangentComplex}
Let $f\: X \to S$ be a morphism of schemes. Then, we have a canonical quasi-isomorphism
\[
\bL_{F^e_f} \cong \bL_f \oplus F^{e,*}_X \bL_f[1]
\]
whose limit as $e \rightarrow \infty$ yields 
\[
\bL_{F_f^{\infty}} \cong F^{\infty,*}_X \bL_f[1].
\] 
In particular, pre-pristine morphisms are $\bL$-trivial and so formally \'etale.
\end{proposition}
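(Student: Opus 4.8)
The plan is to reduce to the affine situation and then run the transitivity triangle of the cotangent complex against the factorization of $f$ through its relative Frobenius, extracting the splitting from the elementary fact that Frobenius annihilates differentials. Since the cotangent complex and the formation of relative Frobenius are both compatible with localization, I would first assume $X=\Spec A$ and $S=\Spec R$ with $f=\Spec\alpha$. Because $F^e_f$ is a morphism of $S$-schemes, one has $f^{(q)}\circ F^e_f=f$, so the transitivity triangle for $X\xrightarrow{F^e_f}X^{(q)}\xrightarrow{f^{(q)}}S$ reads
\[
(F^e_f)^*\bL_{f^{(q)}}\xrightarrow{\ \nu\ }\bL_f\longrightarrow\bL_{F^e_f}\longrightarrow(F^e_f)^*\bL_{f^{(q)}}[1].
\]
The whole proposition will follow once I (i) identify $(F^e_f)^*\bL_{f^{(q)}}$ with $F^{e,*}_X\bL_f$ and (ii) show that the connecting map $\nu$ vanishes, for then the triangle splits as $\bL_{F^e_f}\cong\bL_f\oplus F^{e,*}_X\bL_f[1]$.

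For (ii) I would make the vanishing transparent on a polynomial presentation. If $A=R[x_i\mid i\in I]$ is a polynomial $R$-algebra, then the relative Frobenius exhibits $A$ over $A^{(q)}\coloneqq A\otimes_R F^e_*R$ as the complete intersection cut out by the relations $x_i^q-x_i\otimes1$, whose naive cotangent complex is the two-term complex $\bigoplus_i A\xrightarrow{\delta}\bigoplus_i A\,\mathrm{d}x_i$ with $\delta(x_i^q-x_i\otimes1)=q\,x_i^{q-1}\mathrm{d}x_i=0$ in characteristic $p$. Thus $\delta=0$, the complex splits canonically as $\Omega_f\oplus(\text{free})[1]$, and this is exactly the statement that $\nu$ is null-homotopic. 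This is the precise incarnation of ``Frobenius kills differentials'' (compare $\Omega_f=\Omega_{F_f}$), and it is the source of the direct-sum decomposition; the same computation run over a simplicial polynomial resolution $P_\bullet\to A$ upgrades the levelwise splitting to the quasi-isomorphism in general.

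Step (i) is where I expect the real difficulty. The identification should come from base change for the cotangent complex applied to the cartesian square defining $X^{(q)}$, namely $\bL_{f^{(q)}}\cong(G^e_f)^*\bL_f$, which after pulling back along $F^e_f$ and using $G^e_f\circ F^e_f=F^e_X$ yields $(F^e_f)^*\bL_{f^{(q)}}\cong F^{e,*}_X\bL_f$. The subtlety is that this base change is an isomorphism only when the Frobenius square is Tor-independent, which fails for general non-flat $\alpha$. The way around it is again the polynomial resolution: the terms $P_n$ are $R$-flat and each relative Frobenius $P^{(q)}_n\to P_n$ is finite free, so all the Frobenius base changes are unproblematic at the level of the resolution, where the explicit complete-intersection splitting of the previous paragraph is natural; assembling along the simplicial direction then produces the global identification. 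Making this descent precise—so that the naturality of the levelwise splittings survives passage to the totalization—is the technical heart of the argument.

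Finally I would pass to the limit and read off the corollaries. Since the cotangent complex commutes with the filtered colimit of rings defining the perfection, I would argue that $\bL_{F_f^{\infty}}\cong\colim_e\bL_{F^e_f}$, and that on the unshifted summand $\bL_f$ the transition maps are the Frobenius-semilinear maps that vanish by the same $\delta=0$ principle; only the shifted part survives, giving $\bL_{F_f^{\infty}}\cong F^{\infty,*}_X\bL_f[1]$. For the last assertion, if $f$ is pre-pristine then $F^e_f$ is an isomorphism for some $e$, whence $\bL_{F^e_f}=0$; as $\bL_f$ is a direct summand of $\bL_{F^e_f}$ it must be zero, so $f$ is $\bL$-trivial and therefore formally étale by \autoref{rem.LTrivialImpliesFormallyEtale}.
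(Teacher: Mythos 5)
Your proposal follows the same route as the paper's proof: the transitivity triangle for $X \xrightarrow{F^e_f} X^{(q)} \xrightarrow{f^{(q)}} S$, the vanishing of the connecting map $\nu=\mathrm{d}F^e_f$, the resulting splitting, the base-change identification of the shifted summand, and passage to the limit. Your step (ii) --- proving $\nu=0$ on polynomial algebras via the Jacobian of the relations $z_i^q-y_i$ and propagating it through a functorial simplicial resolution --- is a correct fleshing-out of what the paper simply labels ``$\mathrm{d}F^e_f=0$,'' and your closing deduction (pre-pristine $\Rightarrow$ $\bL_{F^e_f}=0$ $\Rightarrow$ $\bL_f=0$ as a direct summand $\Rightarrow$ formally \'etale) is exactly the paper's.

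The gap is in your step (i), and you half-diagnosed it yourself. The repair you propose cannot work: tensoring a polynomial resolution $P_\bullet \to A$ with $F^e_*R$ over $R$ yields a simplicial model of the \emph{derived} twist $A \otimes^{\bL}_R F^e_*R$, not of $A^{(q)}=A\otimes_R F^e_*R$, so your totalization computes the cotangent complex of $F^e_*A$ over the derived twist; this differs from $\bL_{F^e_f}$ precisely when $\Tor^R_{>0}(A,F^e_*R)\neq 0$, i.e., precisely when the Tor-independence you flagged fails. Moreover, no finer descent argument can close this, because the identification $F^{e,*}_f\bL_{f^{(q)}}\cong F^{e,*}_X\bL_f$ is genuinely false in general. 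Take $\alpha\: R=\bF_p[x]/(x^2) \to A=\bF_p$, $x\mapsto 0$ (the example in \autoref{rem.RelativePerfectionBeingIsoDoesNotImpolyFrobIsISo}): since $x\cdot F_*R=F_*(x^pR)=0$, one gets $A^{(p)}\cong F_*R$ and $f^{(p)}$ is an isomorphism, so $\bL_{f^{(p)}}=0$ and $\bL_{F_\alpha}\cong F_*\bL_{A/R}$; on the other hand $\bL_{A/R}\simeq \bF_p[1]\oplus\bF_p[2]$ (transitivity along $\bF_p[x]\to R\to \bF_p$, where the conormal map kills the generator $x^2$), so $\bL_f\oplus F^{*}_X\bL_f[1]$ has nonzero $H_3$ while $\bL_{F_\alpha}$ does not. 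Thus what is true unconditionally is only $\bL_{F^e_f}\cong \bL_f\oplus F^{e,*}_f\bL_{f^{(q)}}[1]$; rewriting the second summand as $F^{e,*}_X\bL_f[1]$ requires a hypothesis guaranteeing base change for the cotangent complex (Tor-independence of $f$ and $F^e_S$, e.g. $f$ flat or $F_S$ flat). In fairness to you, the paper's own proof passes over this step with ``cotangent complexes commute with base change,'' so your instinct that this identification is the technical heart was correct --- the resolution is that the statement needs a hypothesis (or the summand must be left in the form $F^{e,*}_f\bL_{f^{(q)}}[1]$), not that a cleverer simplicial argument exists. Note that the part everything downstream actually uses --- that $\bL_f$ splits off $\bL_{F^e_f}$, hence pre-pristine implies $\bL$-trivial and formally \'etale --- needs only $\nu=0$ and is unaffected in both your argument and the paper's.
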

\begin{proof}
By \cite[\href{https://stacks.math.columbia.edu/tag/08QR}{Tag 08QR}]{stacks-project}, to the triangle of schemes
\[
\xymatrix{
X \ar[rr]^-{F^e_{f}} \ar[rd]_-{f} & & X^{(q)} \ar[ld]^-{f^{(q)}}  \\
& S &
}
\]
we may associate its fundamental distinguished triangle in the derived category $\mathsf{D}(X)$
\[
F^{e,*}_{f} \bL_{f^{(q)}} \xrightarrow{ \mathrm{d}F^e_{f} = 0} \bL_{f} \to \bL_{F_f^e} \to F^{e,*}_{f} \bL_{f^{(q)}}[1] 
\]
Since the map most to the left is zero, this implies
\[
\bL_{F^e_f} \cong \bL_{f} \oplus F^{e,*}_{f} \bL_{f^{(q)}}[1] \cong \bL_f \oplus F^{e,*}_X \bL_f[1]
\]
as cotangent complexes commute with base change.

In taking the limit as $e \rightarrow \infty$, the direct summand $\bL_f$ is simply replaced by $\bL_{f_{\mathrm{perf}}}$, which is zero as $f_{\mathrm{perf}}$ is a morphism between perfect schemes.
\end{proof}

\begin{remark}
    It is not true that a morphism $f$ such that $F^{\infty}_f$ is an isomorphism is $\bL$-trivial (nor formally \'etale) unless $F_S$ (equivalently $F_X$) is flat. Use the same example from \autoref{rem.RelativePerfectionBeingIsoDoesNotImpolyFrobIsISo}. We hope this convinces the reader that $F^{\infty}_f$ being an isomorphism is not a good notion of ``pristinity.''
\end{remark}

The converse of the last implication in \autoref{pro.Pre-pristineHavetrivialCotangentComplex} does not hold as the following example due to B.~Bhatt and O.~Gabber shows.

\begin{example}[Bhatt--Gabber example of a non-perfect algebra with trivial cotangent complex] \label{ex.BhattGabberExample}
The following example is extracted from \cite{BhattGabberExampleNonReducedTrivialCotangentBundle}. In that note, B.~Bhatt presented an example of an imperfect ring defined over a perfect field with trivial cotangent complex. In particular, formally \'etale (nor triviality of the cotangent complex) does not imply pristine even in the flat case. The algebra is constructed as follows. Let $\kay$ be a perfect field of characteristic $p$. For every $i \in \mathbb{N}$, let $B_i = \kay [x_{i,1},\ldots,x_{i,2^i}]$ and consider the ideal $\fram_i \coloneqq (x_{i,1},\ldots,x_{i,2^i}) \subset B_i$. Define:
\[
A_i \coloneqq B_{i,\textnormal{perf}} /\fram_i \cdot B_{i,\textnormal{perf}}
\]
where 
\[
B_{i,\textnormal{perf}} \coloneqq \colim_{F} B_i = \colim (B_i \xrightarrow{F} B_i \xrightarrow{F} B_i \to \cdots) \cong \kay\Big[x_{i,1}^{1/p^{\infty}},\ldots,x_{i,2^i}^{1/p^{\infty}}\Big]. 
\]
Thus, more succinctly, we have that:
\[
A_i = \kay\Big[x_{i,1}^{1/p^{\infty}},\ldots,x_{i,2^i}^{1/p^{\infty}}\Big]\Bigm/(x_{i,1},\ldots,x_{i,2^i}) \eqqcolon \kay\Big[\varepsilon_{i,1}^{1/p^{\infty}},\ldots,\varepsilon_{i,2^i}^{1/p^{\infty}}\Big].
\]
Next, consider the homomorphism
\[
\theta_i \: A_i \to A_{i+1}, \quad \varepsilon_{i,j}^{1/p^e} \mapsto (\varepsilon_{i+1,2j-1} \cdot \varepsilon_{i+1,2j})^{1/p^e}.
\]
Then, the $\kay$-algebra $A \coloneqq \colim_i A_i$ is a non-reduced algebra with trivial cotangent complex; see \cite{BhattGabberExampleNonReducedTrivialCotangentBundle} for details. In particular, $F_A$ is not injective, so that $F_{A/\kay}$ cannot be injective either (as $F_{\kay}$ is an isomorphism). Hence, this is a formally \'etale but not pre-pristine algebra.

Finally, we would like to remark that it can be seen in a more elementary fashion that $A/\kay$ is formally \'etale. Clearly, $F_{A_i}$ is surjective and thus also $F_A$. Since $F_\kay$ is an isomorphism it follows that $F_{A/\kay}$ is a closed immersion. Hence, $A/\kay$ is formally unramified by \autoref{PrepristineFormallyunramified}. To see it is formally smooth, we consider a $\kay$-algebra $B$ containing an ideal $I\subset B$ whose square is zero and denote by $\pi \: B \to B/I$ the corresponding quotient algebra. Suppose there is a homomorphism of $\kay$-algebras $\vartheta \: A \to B/I$. We wish to lift it to a $\kay$-homomorphism $\theta \: A \to B$. To this end, notice $\vartheta \: A \to B/I$ is nothing but a collection of $\kay$-homomorphisms $\vartheta_i \: A_i \to B/I$ satisfying commutative diagrams
\[
\xymatrix{
A_i \ar[rr]^-{\theta_i} \ar[rd]_-{\vartheta_i} && A_{i+1} \ar[ld]^-{\vartheta_{i+1}} \\
& B/I &
}
\]
However, this simply corresponds to choosing elements $b(i,j,e) \in B$ satisfying relations
\[
b(i,j,1)^p \equiv 0, b(i,j,e+1)^p \equiv b(i,j,e) \bmod I
\]
and moreover
\[
b(i,j,e) \equiv b(i+1,2j-1,e)b(i+1,2j,e) \bmod I.
\]
Indeed, the former two relations correspond to the specification of a $\kay$-homomorphism $\vartheta_i \: A_i \to B/I$, and the latter relation corresponds to the compatibility of these with the transition homomorphisms $\theta_i$. However, the crucial point is that the latter relations force the former to hold $\bmod I^2$ and so to hold in $B$. More precisely, we have
\[
b(i,j,1)^p = 0, \quad b(i,j,e+1)^p = b(i,j,e). 
\]
This simply means that each $\vartheta_i \: A_i \to B/I$ lift to a $\kay$-homomorphism $\theta_i \: A_i \to B$, i.e. $\vartheta_i =\pi \circ \theta_i$. Nonetheless, we do not know \emph{a priori} whether these lifted homomorphisms are compatible with the transition homomorphism $\theta_i \: A_i \to A_{i+1}$, that is, we do not know if $\theta_i = \theta_{i+1} \circ \theta_i$. However, we know that $\theta_i$ and $\theta_{i+1} \circ \theta_i$ coincide after post-composing them with $\pi$. Since the algebras $A_i/\kay$ are themselves formally unramified (since their relative Frobenii are surjective), we see that $\theta_i$ and $\theta_{i+1} \circ \theta_i$ must agree, defining in this way a $\kay$-homomorphism $\theta : A \to B$ so that $\vartheta = \pi \circ \theta$.
\end{example}

\begin{remark}
In \autoref{ex.formallyEtaleNotPristine} we found a first example of a formally \'etale morphism that is not pristine. The fundamental failure for this was that the morphism was not flat. Na\"ively, one might hope that assuming flatness formal \'etaleness may imply pristinity. However, \autoref{ex.BhattGabberExample} shows that this is not possible. Similarly, this example of Bhatt--Gabber shows that a flat formally smooth morphism does not necessarily have flat relative Frobenius (otherwise it would be injective on sections).
\end{remark}

\section{Noetherian Pristinity} \label{sec.NoetherianPristinity}
As we have just seen, the class of (pre-)pristine morphisms fits nicely in between the class of weakly \'etale morphisms and that of formally \'etale morphisms. In this subsection, we specialize to the case of morphisms $f\: X \to S$ between locally noetherian schemes. 

We have already seen how if $F_f$ is a closed immersion (resp. open immersion), then $f$ is formally unramified (resp. formally \'etale). We proceed to prove the converse in the locally noetherian case.

\begin{theorem}[Kunz-type theorem for formal unramification]
\label{NoetherianUnramifiedRelFClosedImmersion}
Let $f \: X \to S$ be a morphism of schemes such that $F_f\: X \to X^{(p)}$ is a morphism of locally noetherian schemes. If $f$ is formally unramified, then $F_f$ is a closed immersion.
\end{theorem}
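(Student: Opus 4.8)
The plan is to turn the purely differential hypothesis $\Omega_f=0$ into honest (finite-type) unramifiedness of the relative Frobenius, and then to leverage the fact that $F_f$ is a universal homeomorphism to upgrade ``unramified'' to ``closed immersion.'' The only nonformal input will be the theorem of Ma--Polstra; everything else is soft.

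First I would observe that, since $f$ is formally unramified, $\Omega_f=0$, and because $\Omega_f=\Omega_{F_f}$ (\cite[\href{https://stacks.math.columbia.edu/tag/0CCC}{Tag 0CCC}]{stacks-project}) the relative Frobenius $F_f$ is formally unramified as well. The zero module is in particular finitely generated, so $f$ is kählerian. This is where the hard ingredient enters: as recalled in the remark on Kählerianity versus $F$-finiteness above, a kählerian morphism whose relative Frobenius is a morphism of locally noetherian schemes is $F$-finite (the corrected Fogarty theorem of Ma--Polstra, \cite[Chapter 11]{polstramafsingularities}). Since $F_f$ is by hypothesis a morphism of locally noetherian schemes, we conclude that $f$ is $F$-finite, i.e.\ $F_f$ is finite. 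In particular $F_f$ is locally of finite type, so from $\Omega_{F_f}=0$ it is genuinely unramified, and being finite it is also proper.

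Next I would use that $F_f$ is a universal homeomorphism (\autoref{lem.RelativveFrobeniiUniversalHomeos}), hence radicial, i.e.\ universally injective. Recall the standard diagonal characterizations: for any morphism $g$, being unramified is equivalent to the diagonal $\Delta_g$ being an open immersion, while being universally injective is equivalent to $\Delta_g$ being surjective. Applying this to $g=F_f$, the diagonal $\Delta_{F_f}$ is simultaneously an open immersion and surjective, hence an isomorphism; that is, $F_f$ is a monomorphism. Finally, a proper monomorphism is a closed immersion (\cite[\href{https://stacks.math.columbia.edu/tag/04XV}{Tag 04XV}]{stacks-project}), and since $F_f$ is finite (so proper) and a monomorphism, it is a closed immersion, as desired.

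The crux of the argument, and its only genuinely difficult step, is the appeal to Ma--Polstra: it is precisely this input that converts $\Omega_f=0$ into the finiteness of $F_f$, and thereby into unramifiedness in the scheme-theoretic (finite-type) sense, from which everything else follows formally. The examples \autoref{ex.formallyEtaleNotPristine} and \autoref{ex.BhattGabberExample} show that without local noetherianity this upgrade, and the conclusion itself, can fail, so no purely formal argument can replace it. As an alternative to the diagonal argument, once $F_f$ is known to be finite one can conclude by Nakayama: each fibre of $F_f$ over $\mathfrak{p}\in X^{(p)}$ is a finite $\kappa(\mathfrak{p})$-algebra with vanishing differentials, hence étale, while radiciality forces it to be local with purely inseparable residue field; thus every fibre equals $\kappa(\mathfrak{p})$, and the finite module map $\sO_{X^{(p)}}\to F_{f,*}\sO_X$ is therefore surjective.
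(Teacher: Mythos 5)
Your proof is correct, and it ultimately rests on the same nonformal input as the paper---Ma--Polstra's Chapter 11 rehabilitation of Fogarty---but it packages that input differently. The paper's own proof is a one-step citation: since $G_f \circ F_f = F_X$, $F_f \circ G_f = F_{X^{(p)}}$ (the ``mutual inverses up to Frobenius'' structure) and $\Omega_{F_f} = \Omega_f = 0$, it invokes \cite[Theorem 11.2]{polstramafsingularities}, which directly yields that $F_f$ is a closed immersion. You instead invoke the corrected Fogarty statement (k\"ahlerian plus $F_f$ a morphism of locally noetherian schemes implies $F$-finite) to get finiteness of $F_f$, and then finish with soft scheme theory: $\Omega_{F_f}=0$ together with locally of finite type gives unramifiedness, so $\Delta_{F_f}$ is an open immersion; universal injectivity of the universal homeomorphism $F_f$ makes $\Delta_{F_f}$ surjective, hence an isomorphism, so $F_f$ is a monomorphism; and a proper monomorphism is a closed immersion. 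This decomposition has a genuine virtue: it isolates the noetherian input into a single finiteness statement and makes transparent that everything past it is formal (your fiberwise Nakayama ending makes the same point even more concretely, and avoids the diagonal criteria altogether). The cost is that you cite a different statement from Chapter 11 (via the paper's remark on k\"ahlerianity versus $F$-finiteness) rather than Theorem 11.2 itself; both citations are legitimate within the paper's framework, and the two statements are close relatives proved from the same mutual-inverse-up-to-Frobenius structure that the paper's proof records explicitly.

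One side remark of yours is inaccurate, though it plays no logical role in the argument: \autoref{ex.formallyEtaleNotPristine} and \autoref{ex.BhattGabberExample} do \emph{not} show that the conclusion of this theorem can fail without noetherian hypotheses. In both examples the relative Frobenius is surjective on rings, i.e., it \emph{is} a closed immersion; what fails in those examples is injectivity. They therefore witness the failure of the non-noetherian converse in part (b) of the Main Theorem (formally \'etale does not imply $F_\alpha$ is an isomorphism), not a failure of ``formally unramified implies $F_f$ is a closed immersion.'' The paper does not actually exhibit a non-noetherian counterexample to the latter, so you should only claim that the noetherian hypothesis is needed for the \emph{proof} (the k\"ahlerian-to-$F$-finite upgrade), not that the conclusion itself is known to fail without it.
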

\begin{proof}
Since $G_f \circ F_f = F_{X}$, $F_f \circ G_f = F_{X^{(p)}}$, and $\Omega_{F_f}=\Omega_f = 0$ (\cite[\href{https://stacks.math.columbia.edu/tag/02H9}{Lemma 02H9}]{stacks-project}), we may apply \cite[Theorem 11.2]{polstramafsingularities} to conclude that $F_f$ is a closed immersion (\cf \cite{TycDifferentialBasisAndp-basis} and \cite[Remark 11.1]{polstramafsingularities}).
\end{proof}

\begin{corollary}[Kunz-type theorem for formal étaleness]
\label{fetaleprepristinefornoetherian}
Let $f\colon X \to S$ be a morphism of locally noetherian schemes. Then $f$ is pristine if (and only if) $f$ is formally \'etale.    
\end{corollary}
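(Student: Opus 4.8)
The plan is to dispatch the ``only if'' direction immediately and concentrate on the converse. If $f$ is pristine then it is in particular pre-pristine, and \autoref{pro.Pre-pristineHavetrivialCotangentComplex} already shows that pre-pristine morphisms are $\bL$-trivial, hence formally \'etale; this is exactly the parenthetical ``only if.'' So the substance is the forward implication: assuming $f$ formally \'etale, I must produce both flatness and pre-pristinity.

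The first move is to upgrade \emph{formal \'etaleness} to \emph{regularity}. Since $f$ is formally \'etale, the homological characterization recalled in \autoref{rem.formallysmoothcharacterization} (\cite[Theorem 2.3.1, Proposition 5.6.2]{Majadas2010}) shows that $f$ is homologically regular, i.e.\ $H_1(\bL_f)=0$ and $\Omega_f$ is flat (indeed $\Omega_f=0$). Because $X$ and $S$ are locally noetherian, homological regularity coincides with regularity (\cite[Proposition 5.6.2]{Majadas2010}), so $f$ is a regular morphism. This single step is what unlocks the two deep inputs I need: by the Radu--Andr\'e theorem (\autoref{theo.andreraduthm}) a regular morphism of locally noetherian schemes is $F$-flat, so $f$ is flat and $F_f$ is flat; and by \autoref{theo.raduandrecore} the scheme $X^{(p)}$ is itself locally noetherian.

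With $X^{(p)}$ now known to be locally noetherian, $F_f \: X \to X^{(p)}$ is a morphism of locally noetherian schemes, and $f$ is formally unramified (being formally \'etale). Hence \autoref{NoetherianUnramifiedRelFClosedImmersion} applies and gives that $F_f$ is a closed immersion. To finish I run the argument of \autoref{rem.WeakiningsOnF_fForPrisitinity}: $F_f$ is now a \emph{flat} closed immersion between locally noetherian schemes, hence an open immersion; and it is also a universal homeomorphism by \autoref{lem.RelativveFrobeniiUniversalHomeos}, in particular surjective. A surjective open immersion is an isomorphism, so $F_f$ is an isomorphism and $f$ is pre-pristine; together with the flatness coming from $F$-flatness this says precisely that $f$ is pristine.

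I expect the genuine obstacle to be not any of these assembly steps but the verification that the noetherian hypotheses of \autoref{NoetherianUnramifiedRelFClosedImmersion} are actually met: the conclusion that $F_f$ is a closed immersion requires $X^{(p)}$ to be locally noetherian, and this is \emph{not} automatic from the noetherianity of $X$ and $S$ alone. The only route I see to it passes through regularity and \autoref{theo.raduandrecore}, so the whole corollary hinges on first promoting formal \'etaleness to regularity; everything downstream (closed immersion $\Rightarrow$ open immersion $\Rightarrow$ isomorphism) is then formal.
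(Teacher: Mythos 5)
Your proposal is correct and follows essentially the same route as the paper's proof: promote formal \'etaleness to regularity via \autoref{rem.formallysmoothcharacterization}, invoke \autoref{theo.raduandrecore} to get local noetherianity of $X^{(p)}$, apply \autoref{NoetherianUnramifiedRelFClosedImmersion} to get that $F_f$ is a closed immersion, and conclude via Radu--Andr\'e flatness and \autoref{rem.WeakiningsOnF_fForPrisitinity}. Your closing observation that the whole argument hinges on securing the noetherian hypothesis for $X^{(p)}$ through regularity is exactly the point the paper's proof is structured around.
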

\begin{proof}
Suppose that $f$ is formally \'etale, making it regular as pointed out in \autoref{rem.formallysmoothcharacterization}. Hence, $X^{(p)}$ is locally noetherian by \autoref{theo.raduandrecore}. We may then use \autoref{NoetherianUnramifiedRelFClosedImmersion} to conclude that $F_f$ is a closed immersion. Moreover, $F_f$ is a flat closed immersion (using \autoref{theo.andreraduthm}) of locally noetherian schemes, and so we are done by \autoref{rem.WeakiningsOnF_fForPrisitinity}. 
\end{proof}

\begin{remark}
    Daniel Fink \cite{FINKRelativeInverseLimitPerfection} has recently obtained a proof of \autoref{fetaleprepristinefornoetherian} in the $F$-finite case by using derived-theoretic methods. Similarly, the same result has been obtained independently at the same time by Datta--Olander \cite{DataOlanderFIso}.
\end{remark}

We conclude our remarks on noetherian pristinity with the following.

\begin{proposition}
\label{theo.pristinecharacterizationnoetherian}
Let $f\: X \to S$ be a morphism between locally noetherian schemes.
\begin{enumerate}

\item{$f$ is pristine if and only if it is $F$-finite, flat, and all fibers $f_s \:X_s \to s$ are pristine.} 
\item{If $f$ is formally \'etale (pristine), then $f$ has relative dimension zero if and only if the residual field extensions $\kappa(x)/\kappa(f(x))$ are separable (i.e. formally smooth) for all $x \in X$. In that case, the fibers of $f$ are locally spectra of finite products of fields and so these are pristine.}
\item If $f$ is formally \'etale and fiberwise $F$-finite, then $f$ has pristine fibers of relative dimension zero.
\item Let $f'\: X'\to S'$ be a base change of $f$ that is a morphism of locally noetherian schemes. If $f$ is an $F$-finite regular morphism, then so is $f'$. 
\end{enumerate}
\end{proposition}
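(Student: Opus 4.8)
The plan is to treat the four parts in turn, in each case reducing the assertion to the characterizations of pristinity already in hand and to the fibral behaviour of the relative Frobenius.

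For (1), the forward implication is formal: a pristine $f$ is flat by definition, its relative Frobenius $F_f$ is an isomorphism and hence finite (so $f$ is $F$-finite), and each fibre $f_s$ is a base change of $f$, thus pristine by \autoref{prepristineproperties}(a). For the converse I would exploit that both $X$ and $X^{(p)}=X\times_{S,F_S}S$ are flat over $S$ and that, by \autoref{rem.relFrobstableunderbasechange}, the base change of $F_f$ along $\Spec\kappa(s)\to S$ is exactly $F_{f_s}$, which is an isomorphism by hypothesis. Since $f$ is $F$-finite, $F_f$ is a finite (and, by \autoref{lem.RelativveFrobeniiUniversalHomeos}, surjective radicial) morphism that is a fibrewise isomorphism between $S$-flat schemes; the \emph{crit\`ere de platitude par fibres} then upgrades $F_f$ to a flat morphism, so $(F_f)_*\sO_X$ is a finite locally free $\sO_{X^{(p)}}$-algebra whose fibral rank is everywhere $1$. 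A unital inclusion into a rank-one locally free algebra is an isomorphism, so $F_f$ is an isomorphism and $f$ is pristine. The bookkeeping needed to apply the fibral criterion (finiteness and local noetherianity of $X^{(p)}$, cf.\ \autoref{rem.WeakiningsOnF_fForPrisitinity}) is the only delicate point here.

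For (2), recall that a formally étale morphism of locally noetherian schemes is regular (\autoref{rem.formallysmoothcharacterization}), so every fibre $X_s$ is geometrically regular over $\kappa(s)$ and satisfies $\Omega_{X_s/\kappa(s)}=0$. If $f$ has relative dimension zero, each fibre is a zero-dimensional geometrically regular $\kappa(s)$-scheme, hence reduced and locally a finite product of fields, and geometric regularity of a point whose local ring is a field says precisely that $\kappa(x)/\kappa(s)$ is separable. Conversely, fix $x\in X_s$ and put $R=\sO_{X_s,x}$, $k=\kappa(s)$, $K=\kappa(x)$: this is a regular local $k$-algebra with $\Omega_{R/k}=0$. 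The key input is the conormal sequence $\mathfrak m_R/\mathfrak m_R^2\to \Omega_{R/k}\otimes_R K\to \Omega_{K/k}\to 0$, whose left-hand map is injective because $K/k$ is separable and $R$ is geometrically regular over $k$. Since $\Omega_{R/k}=0$, this forces $\mathfrak m_R/\mathfrak m_R^2=0$, whence $R=K$ by Nakayama and $\dim R=0$; thus $f$ has relative dimension zero. In that situation the fibres are locally $\Spec$ of finite products of separable field extensions with vanishing differentials, that is, of formally étale (hence pristine) field extensions by the characterization of pristine field extensions, so the fibres are pristine. The left-exactness of the conormal sequence under separability is the one nontrivial ingredient.

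For (3), I would deduce relative dimension zero from (2) by showing that fibrewise $F$-finiteness forces the residue extensions $\kappa(x)/\kappa(s)$ to be separable. The fibres $f_s$ are themselves formally étale over $\kappa(s)$, hence pristine and regular by \autoref{fetaleprepristinefornoetherian}, and now each $X_s$ is $F$-finite; consequently the residue fields $\kappa(x)$ are $F$-finite and the $F$-finite Kähler differentials of $R=\sO_{X_s,x}$ are locally free of the expected rank, with $\dim_K(\Omega_{R/\bF_p}\otimes_R K)=\dim R+\log_p[K:K^p]$. Comparing this with the $\log_p$-degree bound coming from the surjection $\Omega_{\kappa(s)/\bF_p}\otimes_k K\twoheadrightarrow \Omega_{R/\bF_p}\otimes_R K$ (induced by $\Omega_{R/k}=0$) should pin down $\dim R$ and the separable defect of $K/k$ simultaneously. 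The crux — and the most delicate step of the whole proposition — is to convert this numerical comparison into the statement that $K/k$ is separable (equivalently $\dim R=0$): this is exactly where $F$-finiteness must be made to rule out the kind of infinite inseparability that otherwise permits geometrically regular, differential-free fibres of positive dimension. Once separability is secured, (2) delivers both relative dimension zero and pristinity of the fibres.

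For (4), stability is formal once the noetherian hypotheses are granted. Flatness and the relative Frobenius are stable under base change, so $f'$ is flat and $F_{f'}=F_f\times_S S'$ is finite, i.e.\ $f'$ is $F$-finite. For regularity I would invoke the relative Kunz theorem (\autoref{theo.andreraduthm}): $f$ regular means $f$ is $F$-flat, so $F_f$ is flat; its base change $F_{f'}$ is then flat, and together with flatness of $f'$ this says $f'$ is $F$-flat; since $f'$ is assumed to be a morphism of locally noetherian schemes, \autoref{theo.andreraduthm} returns that $f'$ is regular. The $F$-finiteness of $f$, through \autoref{theo.raduandrecore}, also ensures that the relevant Frobenius twists remain locally noetherian, so the hypotheses of the cited theorems are met. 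Hence $f'$ is $F$-finite and regular.
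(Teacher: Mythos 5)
Your parts (2) and (4) are correct: (2) is essentially the paper's reduction to the fiber, with the classical split conormal sequence $0 \to \fram/\fram^2 \to \Omega_{R/k}\otimes_R K \to \Omega_{K/k} \to 0$ (valid when $K/k$ is separable) replacing the paper's appeal to EGA IV 17.1.5; and your (4), which applies Radu--Andr\'e (\autoref{theo.andreraduthm}) in both directions to $F_{f'} = F_f \times_S S'$, is a clean alternative to the paper's route via base change of formal smoothness. The problems are in (1) and, more seriously, in (3).

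In (3) you have a genuine gap: you explicitly defer the crux (``converting the numerical comparison into separability''), and the comparison you propose cannot be completed as stated. Only the \emph{fibers} are assumed $F$-finite, so $k=\kappa(s)$ need not be $F$-finite (a subfield of an $F$-finite field can fail to be $F$-finite, e.g.\ $\bF_p(x_1,x_2,\dots)$ inside its perfection); then $\Omega_{k/\bF_p}\otimes_k K$ is infinite-dimensional and the surjection onto $\Omega_{R/\bF_p}\otimes_R K$ imposes no constraint on $\dim R$. Even when $k$ is $F$-finite, the resulting inequality $\dim R + \log_p[K:K^p] \le \log_p[k:k^p]$ forces $\dim R = 0$ only if $[K:K^p]\ge [k:k^p]$, which fails for inseparable extensions (the perfection $K$ of an imperfect $F$-finite $k$ has $[K:K^p]=1$) --- and separability is exactly what you are trying to prove. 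The paper's argument avoids degrees over $\bF_p$ altogether: localize the fiber at a maximal ideal, pass to the completion (pristine over the local ring by \autoref{pro.completionpristineFfinite}, so still pristine over $k$), write $\hat A \cong \ell\llbracket x_1,\dots,x_d\rrbracket$ by Cohen, and note that surjectivity of $F_{\hat A/k}$ would force $x_d = \sum_i g_i^p c_i$ with $c_i \in k$, impossible for $d>0$ since no monomial $x_d$ occurs on the right. Some such argument, insensitive to $[k:k^p]$, is indispensable here.

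In (1), what you dismiss as ``bookkeeping'' --- local noetherianity of $X^{(p)}$ --- is in fact the deep input. Every usable form of the fibral flatness criterion needs it: the finite-presentation version does not apply because $f$ itself is only flat and $F$-finite, not locally of finite type over $S$, and the local version requires the middle local ring $\sO_{X^{(p)},y}$ to be noetherian (as does your ``finite flat $\Rightarrow$ locally free'' step). It is not supplied by \autoref{rem.WeakiningsOnF_fForPrisitinity}; in the paper it comes from Andr\'e's theorem (\autoref{theo.raduandrecore}), which in turn requires first observing that pristine fibers are geometrically regular (Kunz), hence that $f$ is a regular morphism. Once you insert that step your argument closes up, and your rank-one endgame is then a legitimate, more elementary substitute for the paper's endgame (flatness of $F_f$ via \autoref{theo.andreraduthm}, closed-immersedness via \autoref{NoetherianUnramifiedRelFClosedImmersion}, and flat closed immersion $\Rightarrow$ open immersion); in effect you would be re-proving Radu--Andr\'e's theorem rather than citing it.
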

\begin{proof}
The ``only if'' direction of (a) is immediate from \autoref{prepristineproperties} (a). Conversely, since $F_f$ is a universal homeomorphism, it suffices to show that $F_f$ is an open immersion. To this end, we show that $F_f$ is flat and a closed immersion. To show that $F_f$ is flat, note that $f$ is flat with geometrically regular fibers. So the assertion follows from Radu--Andr\'e's theorem. To show that $F_f$ is a closed immersion, it suffices to show $\Omega_f = 0$ (see \autoref{NoetherianUnramifiedRelFClosedImmersion}). Since this sheaf is coherent, this can be checked on fibers where it is immediate from the hypothesis.

For (b), we may assume that the base of $f$ is a field and further that the source is affine. Thus, let $A/\kay$ be a (noetherian) formally \'etale algebra, which is geometrically regular. Given a maximal ideal $\fram \subset A$, consider the diagram
\[
\xymatrix{
\Spec \kappa(\fram)\ar[rr] \ar[dr]_-{\text{f.un}}  &  &  \Spec A  \ar[dl]^-{\text{f.\'et}}\\
& \Spec \kay  &
}
\]
where we readily see that $\kappa(\fram)/\kay$ is formally unramified as so is $A/\kay$ (use any of the two exact sequences regarding K\"ahler differentials). Then, this makes clear that $\kappa(\fram)/A$ is formally \'etale (i.e. $\fram/\fram^2=0$) if and only if $\kappa(\fram)/\kay$ is formally smooth; see \cite[17.1.5]{EGAIV}. However, since $A$ is noetherian, $\kappa(\fram)/A$ is formally \'etale if and only if $A_{\fram}$ is a regular local ring of dimension zero (i.e. $A_{\fram} \to \kappa(\fram)$ is an isomorphism).

For (c), we may immediately reduce to $f_s\: \Spec A \to \Spec \kay$ formally \'etale with $A$ an $F$-finite noetherian ring. This implies that $F_{f_s}$ is finite and thus $f_s$ is pristine by \autoref{fetaleprepristinefornoetherian}. We may localize at a maximal ideal $\fram$ and, using \autoref{pro.completionpristineFfinite}, pass to the completion $\hat{A}$ of $A$ at $\fram$. By the Cohen structure theorem, $\hat{A} \cong \el \llbracket x_1,\ldots,x_d \rrbracket$, where $d=\dim \hat{A}$. Assume for the sake of contradiction that $d>0$. By surjectivity of $F_{\hat{A}/\kay}$, we can write $x_d$ as a finite sum $x_d = \sum_i g_i^p \cdot c_i$ where $g_i \in \el \llbracket x_1,\ldots,x_d \rrbracket$ and $c_i \in \kay$. Since the coefficient of $x_d$ on the right-hand side is zero, this is a contradiction.

For (d), note that $f'$ is $F$-finite by \cite[Lemma 2 (5)]{HashimotoFfinitenessAndItsdescent}. Hence, the regularity of $f'$ is equivalent to its formal smoothness, the latter base changes by \cite[\href{https://stacks.math.columbia.edu/tag/02H2}{Lemma 02H2}]{stacks-project}.
\end{proof}

\section{Pristine Pullback of Cartier Modules}
\label{sect.pristinepullbacks}
In this section, we construct a pullback functor $\sC \text{-}\mathrm{mod} \to f^\ast \sC \text{-}\mathrm{mod}$ for a pristine morphism $f\: X \to S$ between $F$-finite locally noetherian schemes and a Cartier algebra $\sC$ on $S$. We start by briefly recapping the notions of Cartier module and Cartier algebra.

\begin{definition}[Cartier algebras and modules]
Let $X$ be a locally noetherian scheme.
\begin{enumerate}[(a)]
\item  A \emph{Cartier algebra} $\sC_X$ is a sheaf of $\mathbb{N}$-graded rings on $X$ with $\sC_{X,0} = \sO_X$ that is a quasi-coherent $\sO_X$-bimodule with $r \cdot \varphi = \varphi \cdot r^{q}$ for local sections $\varphi \in \sC_{X,e}$.
\item A \emph{$\sC_X$-module} $\sF$ is a left $\sC_X$-module for which the underlying $\sO_X$-module structure is coherent. We denote the category of $\sC_X$-modules by $\sC_X \text{-}\mathrm{mod}$ and may refer to them simply as \emph{Cartier modules}.
\end{enumerate}
\end{definition}

Note that (\cite[Lemma 5.2]{BlickleStablerFunctorialTestModules}) a $\sC_X$-module $\sF$ is the same as a coherent $\sO_X$-module $\sG$ together with a graded homomorphism of rings 
\[
\Xi\: \sC_X \to \bigoplus_{e\geq 0} \ssHom(F_\ast^e \sG, \sG).
\]

\begin{lemma}
\label{le.pristinenimodulepullback}
Let $M$ be an $R$-bimodule such that $r\cdot m = m\cdot r^q$. If $\alpha\: R \to A$ is a pristine homomorphism of rings, then the left $A$-module $A \otimes_R M \eqqcolon M_A$ is endowed with a right $A$-module structure via the $A$-module isomorphism $F_{\alpha}\: A \otimes_R F^e_* R \to F^e_* A$. Moreover, $M_A$ is an $A$-bimodule such that $a\cdot m = m \cdot a^q$.
\end{lemma}
\begin{proof}
  We can think of $M$ as an $R$-module on the left and an $F^e_* R$-module on the right, given by $m \cdot F^e_* r \coloneqq m \cdot r$. For $r, s \in R$ and $m \in M$, it follows that
  \[
    (r \cdot m) \cdot F^e_* s = (m \cdot r^q) F^e_* s = m \cdot (F^e_* r^q s) =  m \cdot (r F^e_* s).
    \]
  In particular, $M$ is a right $R \otimes_R F^e_* R$-module. It follows that $A \otimes_R M$ is a right $A \otimes_R F^e_* R$-module. By definition, for any $a, b \in A$, $m \in M$, and $r \in R$, we have $(a \otimes m)(b \otimes F^e_* r) = ab \otimes m F^e_* r$. As the relative Frobenius is an isomorphism, this gives a right $F^e_* A$-module action on $M_A$, namely $(a \otimes m) \cdot F^e_* b \coloneqq (a \otimes m) (F_{\alpha}\invrs(b))$ for all $b\in A$.  Note that the relative Frobenius is given by $F_{A/R}(a \otimes F^e_* r) = F^e_* a^q r$ for all $a\in A$ and $r \in R$. It follows that $(a \otimes m) \cdot F^e_* b^q = (a \otimes m) \cdot F_{A/R}(b \otimes 1) = ab \otimes m = b \cdot (a \otimes m)$; as desired. 
\end{proof}

This enables us to define a pristine pullback functor for Cartier modules as follows.

\begin{proposition}
\label{pro.daggerpullback}
Let $f\: X \to S$ be a pristine morphism of locally noetherian $F$-finite schemes and $\sC$ a Cartier algebra on $S$. The pull back $f^\ast \sC$ as a left $\sO_S$-module can naturally be endowed with a Cartier algebra structure via the formula
\begin{align}\label{eq.pristineringstructure} (a \otimes \kappa) \cdot (b \otimes \delta) = a \otimes \kappa F_\ast^e b \delta = \sum_i a b_i \otimes \kappa F_\ast^e r_i \delta
\end{align} for local sections $a,b =\sum_i r_i b_i^q \in \sO_X, r_i \in \sO_S$, $\kappa \in \sC_{e}$ and $\delta \in \sC$.
Moreover, $f^\ast$ induces a functor $\sC$-$\mathrm{mod} \to f^* \sC$-$\mathrm{mod}$.
\end{proposition}
\begin{proof}
Arguing just as in \cite[Proposition 5.3]{BlickleStablerFunctorialTestModules}, we may reduce to the situation of a pristine morphism $f\: \Spec A \to \Spec R$ with the corresponding ring map $\alpha\: R \to A$. It is immediately clear from \autoref{le.pristinenimodulepullback} that $f^\ast \sC$ carries an $A$-bimodule structure with $a \cdot \kappa = \kappa \cdot a^q$ for $\kappa \in \sC_{e}$. A small computation shows that \autoref{eq.pristineringstructure} induces a ring structure.

Recall that the endowment of an $\sO_S$-module $\sF$ with a $\sC$-module structure is equivalent to specifying a homomorphism of graded rings 
    \begin{equation} \label{eq.Cartiermodulestructre}\Phi\: \sC \to \bigoplus_{e \geq 0} \ssHom_S \bigl(F^e_* \sF, \sF\bigr). \end{equation}
We will construct a canonical isomorphism 
\[
f^\ast \ssHom_S \bigl(F^e_* \sF, \sF\bigr) \cong \ssHom_X \bigl(F^e_* f^\ast \sF, f^\ast \sF\bigr).
\] 
With this isomorphism in place, applying $f^\ast$ to \autoref{eq.Cartiermodulestructre} and using this canonical isomorphism then defines the $f^\ast \sC$-module structure on $f^\ast \sF$. 

To obtain the desired isomorphism, consider the cartesian diagram \autoref{eqn.ReelativeFrobeniusDiagram}, from which we obtain the natural isomorphism
\[
f^\ast F^e_{S \ast} \cong G^e_{f\ast} f^{(q)\ast}.
\] 
From our assumption that $F^{e}_f$ is an isomorphism, we obtain a natural isomorphism 
\[
F^e_{f\ast} F_f^{e\ast} \cong \id_{X^{(q)}}.
\] 
Thus, we have natural isomorphisms
\[F^e_{X\ast} f^\ast \cong G^{e}_{f\ast} F^e_{f \ast} f^\ast \cong G^{e}_{f\ast} F^e_{f\ast} F_f^{e\ast} f^{(q)\ast} \cong G^{e}_{f\ast} f^{(q)\ast} \cong f^\ast F^e_{S\ast}. \]
Using the fact that $f$ is flat and $F_\ast^e \sF$ is coherent (since $F_\ast^e$ is finite), the canonical map
\[
f^\ast \ssHom_S(F_{S\ast}^e \sF, \sF) \to \ssHom_X(f^\ast F_{S\ast}^e \sF, f^\ast \sF).
\]
is an isomorphism. By the natural isomorphisms just discussed, we finally obtain the desired isomorphism
\[ 
\ssHom_X(f^\ast F_{S\ast}^e \sF, f^\ast \sF) \cong  \ssHom_X( F_{X\ast}^e f^\ast \sF, f^\ast \sF).
\]

We still need to argue that $f^\ast \Phi$ in \autoref{eq.Cartiermodulestructre} does indeed define a ring homomorphism. This can be checked locally on an open affine cover, so let $f\: \Spec A \to \Spec R$ be pristine and $M$ a coherent $\sC$-module. For homogeneous elements $\kappa, \delta$ of degree $e, e'$ in $\sC$ and $a, b = \sum_i r_i b_i \in A$, an explicit computation reveals that
\[
f^\ast \Phi\Biggl(\sum_i a b_i \otimes \kappa \cdot F_\ast^e r_i \cdot \delta\Biggr) = f^\ast \Phi(a \otimes \kappa) \circ F_\ast^e f^\ast \Phi(b \otimes \delta);  
\]
as required.
\end{proof}

We note that our construction of $f^* \sC$ is different from that in \cite[\S 5]{BlickleStablerFunctorialTestModules}, where the pullback is constructed as a pullback of \emph{right} $\sO_S$-modules. Denoting our pristine pullback momentarily by $f^{\dagger}$, we next show that there is a natural isomorphism $f^\dagger \to f^\ast$.

Given a Cartier algebra $\sC$, we will write $\prescript{\mathbf{l}}{}\sC$ for the underlying left $\sO_X$-module and $\sC^\mathbf{r}$ for the underlying right $\sO_X$-module.  For a commutative ring $R$, we may view a right $R$-module $M$ as a left $R$-module $M^\mathrm{op}$ according to the rule $r \cdot m \coloneqq m \cdot r$. We will similarly express the change from a left $R$-module $N$ to a right $R$-module by $N^{\mathrm{op}}$. That a ring $\sC$ with an $\sO_X$-bimodule structure is a Cartier algebra may then be expressed by $(F_\ast^e \sC^{\mathbf{r}})^{\mathrm{op}} =\prescript{\mathbf{l}}{}{\sC}$.

\begin{proposition}
\label{pro.Cartierstructurecomparison}
With notation as in \autoref{pro.daggerpullback}, there is a natural isomorphism $f^{\dagger} \sC \to f^* \sC$ of Cartier algebras given locally by $a \otimes \varphi \mapsto \varphi \otimes a^q$ for all $a\in \sO_X(U)$ and $\varphi \in \sC(U)$ on an open $U \subset X$.
\end{proposition}
\begin{proof}
 The Cartier algebra $f^* \sC$ is defined as a right $\sO_X$-module by $f^* \sC^{\mathbf{r}}$ and its left $\sO_X$-module structure is $(F^e_* f^* \sC^{\mathbf{r}})^{\mathrm{op}}$. On the other hand, $f^{\dagger} \sC$ is defined so that
 \[
 \big(\prescript{\mathbf{l}}{}{f}^{\dagger} \sC\big)^{\mathrm{op}} = \big(f^* {^{\mathbf{l}}}\sC\big)^{\mathrm{op}} \cong f^* F^e_* \sC^{\mathbf{r}} .
 \]
By pristinity, we have a canonical isomorphism of $\sO_X$-modules $f^* F^e_* \sC^{\mathbf{r}} \to F^e_* f^* \sC^{\mathbf{r}}$ given on local sections by $\varphi \otimes a \mapsto \varphi \otimes a^q$, just as in the proof of \autoref{pro.daggerpullback}. In other words, we have the following canonical isomorphism of $\sO_X$-modules
\begin{align*}
 \big(\prescript{\mathbf{l}}{}f^{\dagger} \sC\big)^{\mathrm{op}} &\xlongrightarrow{\cong} F^e_* f^* \sC^{\mathbf{r}}\\
 a \otimes \varphi &\longmapsto \varphi \otimes a^q.
 \end{align*}
 By definition, the right $\sO_X$-module structure of $f^{\dagger} \sC$ and the left $\sO_X$-module structure of $f^*\sC$ are defined through this canonical isomorphism. In other words, the above canonical isomorphism becomes an isomorphism $f^{\dagger} \sC \to f^* \sC$  of $\sO_X$-bimodules. Additionally, one readily checks that it is a homomorphism of graded rings. \end{proof}

To illustrate the usefulness of the pristine pullback, we mention some results on how key constructions on $F$-singularity theory using Cartier modules commute with pristine base change. The first transformation result generalizes Blickle's and the second named author theorem on the commutativity of étale base change with functorial test modules \cite{BlickleStablerFunctorialTestModules}.

\begin{theorem} \label{thm.TransRuleTestModulesPristinePullback}
    Let $f\: Y \to X$ be a pristine morphism of $F$-finite locally noetherian schemes. Let $\sC$ be a Cartier algebra on $X$ and $\sF$ be a $\sC$-module without embedded primes. Then
    \[
    \uptau(f^{*} \sF, f^{*} \sC) = f^{*}\uptau(\sF,\sC).
    \]
    In particular, $F$-regularity and $F$-rationality are local properties in the site defined by pristine fpqc coverings.
\end{theorem}

Using the pristine pullback, this transformation rule can be proved using more direct and simpler methods than those in \cite{BlickleStablerFunctorialTestModules}. For example, it can be proved without resorting to any Galois descent. However, the proof is still rather lengthy. In a forthcoming work by the authors, we will generalize this result to the case of regular morphisms (essentially relaxing the triviality of $F_f$ to its flatness) and in particular we will give another proof there. For the sake of completeness, we decided to include a simpler version of it here as an appendix. 

To conclude, we show that common local invariants to measure singularities commute with local pristine homomorphisms. See \cite{BlickleSchwedeTuckerFSigPairs1} for further details on these prominent invariants.
 
\begin{theorem} \label{thm.TransRules}
Let $\theta \: (R,\fram,\kay) \to (S,\fran,\el)$ be a pristine local homomorphism of $F$-finite and noetherian rings and set $f = \Spec \theta$. Let $\sC$ be a Cartier $R$-algebra acting on $R$. We have the following transformation rule for the $F$-splitting ideals
\[
I_e(S,f^*\sC)=I_e(R,\sC)S.
\]
In particular, we obtain the equality between the $F$-splitting numbers $a_e(S,f^* \sC)=a_e(R,\sC)$. Moreover, the following transformation rules hold:
 \begin{enumerate}
    \item ($F$-signature) $s(S,f^*\sC) = s(R,\sC)$,
    \item ($F$-splitting primes) $\upbeta(S, f^*\sC) = \upbeta(R,\sC) S$,
    \item ($F$-splitting ratios) $r(S, f^*\sC) = r(R,\sC)$.
\end{enumerate}
\end{theorem}
\begin{proof}
We start with the following observation.

\begin{claim}
    $\fram S = \fran$ and so $\lambda_S(f^\ast M)  = \lambda_R(M)$ for all $R$-modules $M$.
\end{claim}
\begin{proof}[Proof of claim]
    Since the fiber $S/\m S$ of $f$ is zero-dimensional and regular by \autoref{theo.pristinecharacterizationnoetherian} (b) it is a field. Hence, $f^{-1}(\fram)=  \fran$ and $\m S = \n$. For the length equality, note that since $f$ is flat we have $\lambda_S(f^\ast M) = \lambda_R(M) \cdot \lambda_S(S/\fram S) = \lambda_R(M)$.
\end{proof}

Note that every $\psi \in f^\ast \sC_e$ is obtained using a commutative diagram
\[
\xymatrixcolsep{3pc}\xymatrix{F^e_\ast S \ar[r]^\psi  & S\\ F^e_\ast R \otimes_R S \ar[u]^{F^e_{\theta}} \ar[r]^{\varphi \otimes \id} &R \otimes_R S \ar[u]_{\mathrm{can}} }
\]
for some $\varphi \in \sC_e$. In particular, since the vertical maps are isomorphisms, we deduce that $\psi$ is surjective if and only if so is $\varphi$ (note that $f$ is faithfully flat). Thus $(f^\ast\sC)_e^{\mathrm{ns}} = f^\ast (\sC^{\mathrm{ns}}_e)$; with notation as in \cite[Definition 3.3]{BlickleSchwedeTuckerFSigPairs1}. 

Applying the claim to $M = \sC_e/\sC_e^{\mathrm{ns}}$ yields the equality between the $F$-splitting numbers $a_e(S,f^*\sC) = a_e(R,\sC)$. Thus, to see that $I_e(S,f^*\sC)\supset I_e(R,\sC)S$ is an equality, it suffices to show that their co-lenghts coincide. This holds because $\kay\to \el$ is pristine and so $\alpha(R)\coloneqq [\kay : \kay^p] = [\el:\el^p] \eqcolon \alpha(S)$ (see \cite[Proposition 3.5]{BlickleSchwedeTuckerFSigPairs1}).

Note that $\dim R = \dim S$ since $\dim S/\fram S = 0$ using \cite[Theorem 10.10]{EisenbudCommutativeAlgebraWithAView}. Thus, by \cite[Theorem 3.11]{BlickleSchwedeTuckerFSigPairs1} we conclude that $s(R, \sC) = s(S, f^\ast \sC)$.

For (b), by the above, we may assume that both $F$-splitting primes are proper. Let $f'$ be the pullback of $f$ along the closed subscheme cut out by $\upbeta(\sC,R)$.

By definition of the splitting prime $(R/\upbeta(\sC,R), \sC)$ is $F$-regular. Since $f'$ is pristine, we conclude that $(S/\upbeta(\sC, R)S, f^\ast \sC)$ is also $F$-regular. Since the minimal primes $\q_i$ of $\upbeta(\sC, R)S$ are $f^\ast \sC$-submodules (cf.\ \cite[Corollary 4.8]{SchwedeCentersOfFPurity}), we conclude that any $\q_i$ is a maximal proper $f^\ast \sC$-submodule of $S$. But $\upbeta(S,f^*\sC)$ is the only one. Hence, $\upbeta(\sC, R)S = \upbeta(f^\ast \sC, S)$. 

Formula (c) is an immediate consequence of (a) and (b).
\end{proof}

\begin{remark}
    By working with Frobenius powers in place of $F$-splitting primes in \autoref{thm.TransRules}, one gets that the minimal number of generators of $F_*^e R$ and $F^e_*S$ are the same; in particular, one has equality of Hilbert--Kunz multiplicities $e_{\mathrm{HK}}(R)=e_{\mathrm{HK}}(S)$. It should be noted that this asymptotic equality between Hilbert--Kunz multiplicities and $F$-signatures (however not for general Cartier algebras) is known to hold more generally for regular local maps \cite{YaoObservationsAboutTheFSignature}, \cite[Proposition 3.9b]{KunzOnNoetherianRingsOfCharP}, and \cite[\S3]{CarvajalSchwedeTuckerBertiniFSignature}. What we show here is that in the pristine case it is an equality on the nose between the respective numerical ranks of Frobenius pushforwards.
\end{remark}

\appendix

\section{Proof of \autoref{thm.TransRuleTestModulesPristinePullback}}

We may restrict ourselves to the affine setup and consider $f$ to be the spectrum of a homomorphism $R \to S$ of noetherian $F$-finite rings. We let $\sC$ be a Cartier algebra and $M$ be a $\sC$-module. We write $\sC_+ = \bigoplus_{e \geq 1} \sC_e$. Let us next recall some relevant notions from the theory of Cartier modules:

\begin{enumerate}
    \item Blickle showed in \cite[Proposition 2.13]{BlickleTestIdealsViaAlgebras} that the descending chain
    \[ \sC_+ M \supset \sC_+^2 M \supset \cdots \]
    stabilizes. Its stable member is denoted by $\underline{M}$ or $\sC_+^e M$ for $e \gg 0$.
    \item If $M = \underline{M}$, then $M$ is called \emph{$F$-pure}.
    \item If all associated primes of $M$ are minimal,\footnote{It is sufficient to consider minimal primes of $\underline{M}$ only.} then the \emph{test module} $\uptau(M, \sC)$ is the smallest Cartier submodule $N$ of $\underline{M}$ such that for every minimal prime $\eta$ of $M$ the inclusion $N_\eta \subset M_\eta$ is an equality. We say that $(M,\sC)$ is \emph{$F$-regular} if $\uptau(M,\sC)=M$.
\end{enumerate}

\begin{lemma}
\label{le.pristinefpure}
With notation as above, $(f^*\sC_{+}) f^\ast M = f^\ast \sC_{+} M$.
\end{lemma}
\begin{proof}
This proceeds as \cite[Lemma 6.1]{BlickleStablerFunctorialTestModules}.
\end{proof}

\begin{lemma}
\label{le.assprimespristinepullback}
With notation as above, suppose that $M$ has no embedded primes. Then $\Ass f^\ast M = f^{-1}(\Ass M)$ consists of the minimal primes lying over the minimal primes of $M$.
\end{lemma}
\begin{proof}
We may assume that $M$ is $F$-pure; see \cite[Lemma 1.9]{BlickleStablerFunctorialTestModules}. If $I = \Ann M$, then $I$ is a radical ideal (\cite[Proposition 2.21]{BlickleTestIdealsViaAlgebras}). Making a base change, we may assume that $R$ is reduced by \autoref{prepristineproperties} (a). By \autoref{le.pristinefpure}, $f^\ast M$ is also $F$-pure. Since $S$ is a flat $R$-module, and using \cite[\href{https://stacks.math.columbia.edu/tag/0312}{Tag 0312}]{stacks-project}, we have 
\[\Ass_S f^\ast M = \bigcup_{\p \in \Ass_R M} \Ass_S S/\p S 
\] and $\Ass S/\p S$ consists of the primes lying over $\p$. Since $\p$ is minimal, $\Ass_S f^\ast M$ is, by going down for flat maps (\cite[\href{https://stacks.math.columbia.edu/tag/00HS}{Tag 00HS}]{stacks-project}), the set of minimal primes lying above the minimal primes of $M$.
\end{proof}

\begin{theorem}
\label{theo.Fregularpristinepullback}
With notation as above, suppose that $M$ has no embedded primes.
If $(M,\sC)$ is $F$-regular, then so is $(f^\ast M, f^*\sC)$. The converse is true if $f$ is surjective.
\end{theorem}
\begin{proof}
If $f$ is surjective and $f^\ast M$ is $F$-regular, then $M$ is $F$-pure by \autoref{le.pristinefpure}. Hence, we may assume that $M$ is $F$-pure. With this in mind, we may base change to $R/\Ann_R(M)$ using \autoref{prepristineproperties} (a). We will thus assume that $R$ is reduced and $\Supp M = \Spec R$.

If $f$ is surjective, then we may argue just as in \cite[Theorem 6.5]{BlickleStablerFunctorialTestModules} to conclude that $f^\ast M$ is $F$-regular only if $M$ is $F$-regular. Conversely, since $R$ is excellent and reduced, the regular locus is dense and open. Hence, by prime avoidance, we find $c \in R$ such that $R_c$ is regular and $c$ is not contained in any minimal prime of $R$. If $f^\ast M_c$ is $F$-regular, then 
\[
\uptau(f^\ast M, f^\ast \sC) = f^\ast \sC_+ c f^\ast M = f^\ast  \sC_+ c M = f^\ast  \uptau(M, \sC) = f^\ast M,
\]
where we use \cite[Theorem 3.11]{BlickleTestIdealsViaAlgebras} and \cite[Proposition 2.9]{BlickleStablerFunctorialTestModules} for the first and penultimate equality and \autoref{le.pristinefpure} for the second equality. Thus, we may replace $R$ with $R_c$ and can assume that $R$ is regular. Moreover, we can write $R = R_1 \times \ldots \times R_\ell$, where the $R_i$ are regular domains. By \cite[Lemma 6.13]{StablerVfiltrations}, we can check for $F$-regularity on each $R_i$ separately and therefore may assume that $R$ is a (regular) domain.

By \autoref{le.assprimespristinepullback}, the associated primes of $f^\ast M$ are the minimal primes of $S$ lying over $0 \in \Spec R$. As $f$ is pristine, the generic fiber of $f$ consists precisely of all minimal primes $\eta_1, \ldots, \eta_n$ of $S$ by \autoref{theo.pristinecharacterizationnoetherian} (c).

Let $N \subset f^\ast M$ be a Cartier-submodule such that $N_{\eta_i} = f^\ast  M_{\eta_i}$ for all $\eta_i$. Write $K$ for the fraction field of $R$. Note that $S \otimes_R K = \kappa(\eta_1) \times \cdots \times \kappa(\eta_n)$ by the above. As the restriction $N'$ of $N$ to the generic fiber is the direct sum of the $N_{\eta_i}$ and 
\[
N' = N \otimes_R K = N[(R \setminus 0)^{-1})] = f^\ast M  \otimes_R K,
\]
we find $0 \neq a \in R$ such that $N_a = f^\ast M_a$. In particular, there is $s \gg 0$ such that $a^s f^\ast  M \subset N$.
On the other hand, 
\[f^\ast\sC_+ a^s f^\ast M = f^\ast  \sC_+ a^s M = f^\ast  \uptau(M, \sC) = f^\ast  M ,
\] where we use \autoref{le.pristinefpure} for the first equality and \cite[Theorem 3.11]{BlickleTestIdealsViaAlgebras} for the second. As $N$ is a Cartier module, we have $f^\ast \sC_+ a^s f^\ast  M \subset N$. Hence $N = f^\ast M$ and $f^\ast M$ is $F$-regular.
\end{proof}

As a corollary, we get \autoref{thm.TransRuleTestModulesPristinePullback}.

\begin{proof}[Proof of \autoref{thm.TransRuleTestModulesPristinePullback}]
By the exactness of $f^\ast$, we have $f^\ast \uptau(M, \sC) \subset f^\ast M$. We want to argue that $\uptau(f^\ast M, f^\ast \sC) \subset f^\ast \uptau(M, \sC)$. 
 By definition, for any minimal prime $\eta$ of $M$, we have $\uptau(M, \sC)_\eta = M_\eta$. Applying $f^\ast$, using \autoref{le.assprimespristinepullback} and the fact that we can pass from $f^{-1}(\eta)$ to any prime in the fiber by a localization since $f$ is pristine (\autoref{theo.pristinecharacterizationnoetherian} (c)), we have
\[ f^\ast(\uptau(M,\sC)))_\nu = f^\ast M_\nu\]
for all minimal primes $\nu$ of $f^\ast M$. By definition, $\uptau(f^\ast M, f^\ast \sC)$ is the smallest submodule of $f^\ast M$ for which the inclusions
\[ 
\uptau(f^\ast M, f^\ast \sC)_\nu \subset f^\ast M_\nu \] 
are equalities, so that $\uptau(f^\ast M, f^\ast \sC) \subset f^\ast \uptau(M, \sC)$. Since $f^\ast \uptau(M, \sC)$ is $F$-regular by \autoref{theo.Fregularpristinepullback}, we conclude that the equality holds.
\end{proof}

\bibliographystyle{skalpha}
\bibliography{MainBib}
\end{document}